\documentclass[12pt]{amsart}
\usepackage[below]{placeins}


\usepackage{amsmath,amscd,amssymb, epsfig,psfrag,subfigure}

\setlength{\topmargin}{0.5cm}
\setlength{\oddsidemargin}{-0.2cm}
\setlength{\evensidemargin}{-0.2cm}
\textheight = 22cm  
\textwidth = 16.2cm

\newtheorem{theorem}{Theorem}[section]
\newtheorem{lemma}[theorem]{Lemma}

\newtheorem{prop}[theorem]{Proposition}
\newtheorem{cor}[theorem]{Corollary}
\theoremstyle{definition}
\newtheorem{definition}[theorem]{Definition}

\newtheorem{rmk}[theorem]{Remark}

\def\L{\mathcal{L} }
\def\T{\mathcal{T}}

\def\S{\Sigma}

\def\HH{\mathbb{H}}

\def\HH{\mathbb{H}}

\def\split{\rightharpoonup}

\edef\t@mp{\catcode`\noexpand\~=\the\catcode`\~}%
    \catcode`\~=12
    \def\tild@{~}%
    \t@mp

\begin{document}

\title{Ideal triangulations of pseudo-Anosov mapping tori} 

\author[Ian Agol]{%
        Ian Agol} 
\address{%
    University of California, Berkeley \\
    970 Evans Hall \#3840 \\
    Berkeley, CA 94720-3840} 
\email{%
     ianagol@math.berkeley.edu}  
\thanks{Agol partially supported by NSF grant DMS-0806027}
\subjclass[2000]{57M}

\date{%
 Januar, 2010}

\dedicatory{Dedicated to Bus Jaco on the occasion of his 70th birthday}


\begin{abstract} 
We show how to construct an ideal triangulation of a mapping
torus of a pseudo-Anosov map punctured along the singular fibers. This gives rise to 
a new conjugacy invariant of mapping classes, and a new proof of a theorem of Farb-Leininger-Margalit. 
The approach in this paper is based on ideas of Hamenstadt. 
\end{abstract} 

\maketitle
\section{Introduction}
Recently, Farb, Leininger and Margalit \cite{FLM} proved that the mapping tori of small dilatation
mapping classes of closed surfaces are obtained by Dehn filling on finitely many cusped hyperbolic
3-manifolds. Recent results of Hamenstadt \cite{Hamenstadt09} suggested another approach to this
theorem using splitting sequences of train tracks. In discussions with Hamenstadt, she suggested that her complex
of train track splitting sequences should give a new solution to the conjugacy problem for pseudo-Anosov mapping
classes, somewhat similar to work of Mosher on ``circular expansion complexes" \cite{Mosher83, Mosher86, Mosher03}. 
In this paper we give an exposition of these results from the perspective
of measured train track splitting sequences. Instead of a complex of splittings, we obtain a taut ideal triangulation
associated to a mapping class. The fact that the reverse of splitting sequences (folding sequences) give
rise to Perron-Frobenius maps of the weight spaces of train tracks enables us to give a new proof of the main result of \cite{FLM}. 

What Hamenstadt proved is that if one is given a minimal lamination $\L$ on a surface, there is a complex
of splitting sequences of train tracks carrrying $\L$ which forms a CAT(0) cube complex in a natural way, with
vertices corresponding to train tracks carrying the lamination $\L$, directed edges corresponding to $\L$-splits of the train tracks, and cubes corresponding
to commuting collections of $\L$-splits \cite{Hamenstadt09}. This is analogous to the complex constructed by Mosher \cite{Mosher03}, however
we remark that the definition of ``splitting sequence" in that manuscript is distinct from the usage in this paper (Mosher allows shifts/slides as well).
If one takes the stable lamination associated to a pseudo-Anosov map $\varphi$, then one may find a bi-infinite cube
complex of train tracks which is invariant under $\varphi$, and this gives rise to an invariant of the conjugacy
class of $\varphi$. Instead of considering all sequences of splittings in this paper, we consider ``maximal splitting"
sequences, where the train tracks have a measure, and the splittings occur at the branches of the train track of maximal weight.
We prove that these sequences are eventually periodic for a pseudo-Ansov stable lamination in Section \ref{splitting sequences} (this is
somewhat analogous to the approach of \cite{PapadopoulosPenner87, PapadopoulosPenner90} and \cite[Lemma 10.2.6]{Mosher03}). 
These sequences give something like a continued fraction expansion for pA mapping classes, analogous to the
case of Anosov maps of the 2-torus (Mosher observed the analogy between continued fractions and his
circular expansion complexes \cite{Mosher03}). This approach allows us to reprove the main result of \cite{FLM} in Section \ref{dilatation}. 
We also obtain a layered ideal  triangulation of the mapping torus punctured at the singular fibers,
from Whitehead moves on the triangulations dual to the  train tracks in the splitting sequence in Section \ref{triangulations}. 
The natural structure of these triangulations is a taut ideal triangulation, introduced by Lackenby \cite{Lackenby00}.

Acknowledgements: We thank Ursula Hamenstadt, Chris Leininger, and Saul Schleimer for helpful
conversations. We thank Matthias Goerner, Yi Liu, and Lee Mosher for making comments on an earlier draft. 
We thank Marc Lackenby for allowing us to use some of his figures. 

\section{Definitions}
We review some background definitions and establish some notation in this section.

Let $\Sigma=\Sigma_{g,n}$ be an orientable surface of genus $g$ with $n$ punctures, and let $\varphi: \Sigma\to \Sigma$ be a homeomorphism. 
By the Nielsen-Thurston classification of mapping classes, $\varphi$ is conjugate to either an element of finite order, a reducible mapping class,
or to a pseudo-Anosov map \cite{Th88, Poenaru80}. In the third case, there are  the stable and unstable measured laminations $\L^s$ and $\L^u \in \mathcal{ML}(\S)$ associated to $\varphi$ such that 
for any curve $c\subset \Sigma$, $[\varphi^n(c)] \to [\L^s] \in \mathcal{PL}(\S)$, and $[\varphi^{-n}(c)]\to [\L^u]\in \mathcal{PL}(\S)$ (see \cite[Chapter 8]{Th} for the
notation). Moreover,
$\L^s$ and $\L^u$ meet every essential closed curve in $\S$ which is not isotopic into a neighborhood of a puncture. Further, there exists a
dilatation $\lambda(\varphi) \in (1,\infty)$, $\varphi(\L^s)=\lambda(\varphi)\L^s$ and $\varphi(\L^u)=\lambda(\varphi)^{-1} \L^u$, up to isotopy. These 
laminations are only unique up to scaling and isotopy. 
If we choose a complete finite area hyperbolic metric on $\S$, then we may isotope the  measured laminations $\L^s$ and $\L^u$ to have totally
geodesic leaves. Each complementary region will be isometric to an ideal polygon with at least three sides, 
or to a punctured disk with at least one puncture on the boundary. If we puncture $\S$ at a point in each ideal polygon
complementary region, we obtain a surface $\S^{\circ}$, and we have a well-defined (up to isotopy) restriction
map $\varphi^\circ=\varphi_{|\S^\circ} : \S^{\circ}\to \S^\circ$. Moreover, $\lambda(\varphi^\circ)=\lambda(\varphi)$, and we may assume
$[\varphi^\circ(\L^{u,s})]  =[\L^{u,s}] \in \mathcal{PL}(\S^\circ)$. 

To encode a lamination combinatorially, we use train tracks (after Thurston \cite[Ch. 8]{Th}). A train track $\tau\subset \Sigma$ is a 1-complex with
trivalent vertices which is locally modeled on a switch (see Figure \ref{switch}(a)) and satisfying some extra conditions. Edges of the train track are
called {\it branches}, and vertices are called {\it switches}.   Each branch $e\subset \tau$
is $C^1$ embedded, and at each switch of $\tau$, there is a well-defined tangent space to the branches coming into the switch.
A {\it half-branch} is an end of the interior of a branch. Each switch is in the closure of three half-branches. 
 A {\it large half-branch} is the end of a half-branch meeting a switch
on the the side of the tangent space to the switch 
opposite of the other two half-branches incident with the switch, and a {\it small half-branch} is a half-branch meeting
the switch on the same side of the tangent space to the switch as another branch (in Figure \ref{switch}(b), the half-branch
labelled $a$ is large, whereas the half-branches labeled $b$ and $c$ are small). 
For each component $R$ of $\S-\tau$, the boundary $\partial R$ is a piecewise smooth curve. 
A non-smooth point of $\partial R$ is a {\it cusp}. 
For each component $R$ of $\S -\tau$, the double of $R$ along $\partial R$ with the
cusps removed must have negative Euler characteristic.  We will
follow the notation and conventions of Penner-Harer \cite{PennerHarer92}. A {\it large branch} is a branch with 
both ends being large half-branches, and  a {\it small branch} has both ends small half-branches. Otherwise,
a branch is {\it mixed}. 
When all of the complementary regions of the
train track $\tau$ are punctured disks, then it is convenient to label the branches of the track by the isotopy class of the arc type
of an ideal edge dual to the branch. This gives an unambiguous way of labeling branches in differing train tracks on the
same surface. 

\begin{figure}[htb] 
	\begin{center}
	\psfrag{a}{$a$}
	\psfrag{b}{$b$}
	\psfrag{c}{$c$}
	\psfrag{d}{$a=b+c$}
	\subfigure[Train track switch]{\epsfig{figure=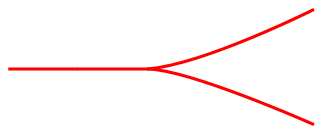,angle=0,width=.45\textwidth}}\quad
	\subfigure[Measured train track switch]{\epsfig{figure=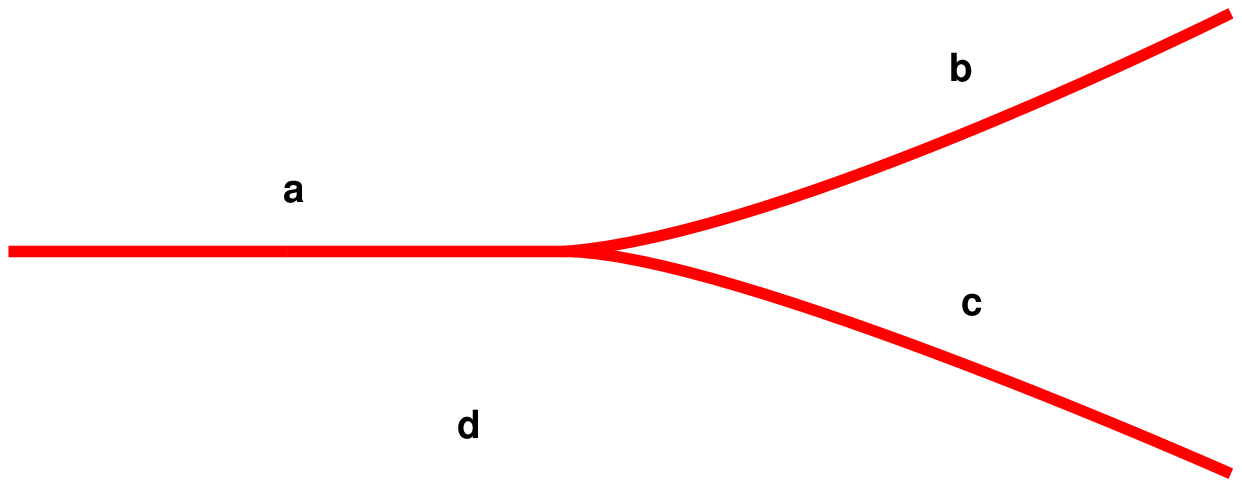,angle=0,width=.45\textwidth}}
	\caption{\label{switch} Train track switches}
	\end{center}
\end{figure} 

A \emph{measured train track} is a train track $\tau$ together with a transverse measure 
$\mu$ which is a function assigning a weight to each edge of $\tau$. We will denote a
measured train track as a pair $(\tau,\mu)$. At each switch of $\tau$ with incoming
edges $b,c$ and outgoing edge $a$, the weights of 
the adjacent edges must satisfy $\mu(a)=\mu(b)+\mu(c)$ (Figure \ref{switch}(b)). 
In some pictures, we will abuse notation and say $a=b+c$ when we really mean $\mu(a)=\mu(b)+\mu(c)$.
This is especially convenient when the train track changes, but most of the labelled edges do not. 
The {\it weight space} $W(\tau)$ is the convex space of all positive measures on $\tau$. 
The lamination $\L$ is {\it suited to } the train track $\tau$ if there is a differentiable
map $f:\S\to\S$ homotopic to the identity such that $f(\L)=\tau$, and $f$ is non-singular
on the tangent spaces to the leaves of $\L$. If $\L$ is a measured lamination and 
$\L$ is suited to $\tau$, then $\L$ induces a measure $\mu$ on $\tau$ given by
the measure on $\tau$ which pulls back to the transverse measure on $\L$ via
the map $f$. 

A {\it splitting} of a measured train track $(\tau,\mu)$ is a move at a large branch which splits it
according to the weights of the neighboring edges (see Figure \ref{measuresplit}). We use
the notation $(\tau,\mu) \split_e (\sigma, \nu)$.  
\begin{figure}[htb] 
	\begin{center}
	\psfrag{a}{$a$}
	\psfrag{c}{$c$}
	\psfrag{b}{$b$}
	\psfrag{d}{$d$}
	\psfrag{e}{$a+b=e=c+d$}
	\psfrag{f}{$e'=c-a$}
	\psfrag{g}{$(\tau,\mu)$}
	\psfrag{h}{$(\sigma,\nu)$}
	\epsfig{file=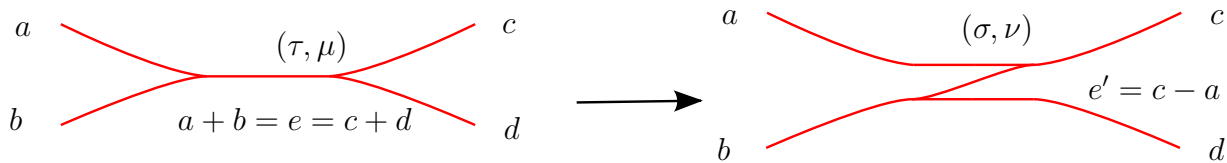, width=\textwidth}
	\caption{\label{measuresplit} Splitting a large branch when $a<c$. }
	\end{center}
\end{figure} 

If the measured lamination $\L$ is suited to $(\tau,\mu)$, then $\L$ will also be
suited to  $(\sigma,\nu)$ when $(\tau,\mu)\split_e (\sigma,\nu)$. If the weights $\mu(a)=\mu(c)$,
then the branch $e'$ will not appear in the train track $\sigma$, but we do not consider this to be
a split in this paper. Conversely, any measured
train track $(\tau,\mu)$ gives rise to a unique measured lamination $\L$ suited to $(\tau,\mu)$.

Two other moves on train tracks do not depend on the measure. A {\it shift} changes
the track at a mixed branch (Figure \ref{shifting}) (these are also called {\it slides} \cite{Mosher03}). A {\it fold} changes the track at a small branch (Figure \ref{fold}). The shift
is self-inverse, whereas the fold is inverse to a split. However, the split depends on the
measure, whereas the fold does not, which is why it is important to make a distinction. 
\begin{figure}[htb] 
	\begin{center}
	\epsfig{file=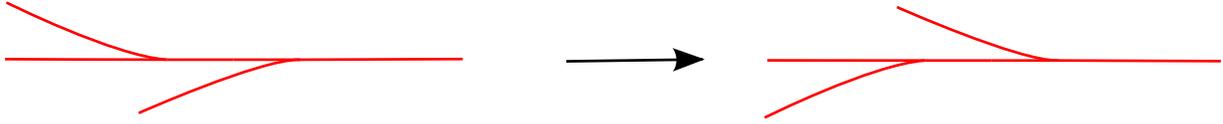, width=\textwidth}
	\caption{\label{shifting} Shifting a mixed branch. }
	\end{center}
\end{figure} 
\begin{figure}[htb] 
	\begin{center}
	\psfrag{a}{$a$}
	\psfrag{c}{$c$}
	\psfrag{b}{$b$}
	\psfrag{d}{$d$}
	\psfrag{e}{$e'=a+e+d$}
	\psfrag{f}{$e$}
	\psfrag{g}{$(\tau,\mu)$}
	\psfrag{h}{$(\sigma,\nu)$}
	\epsfig{file=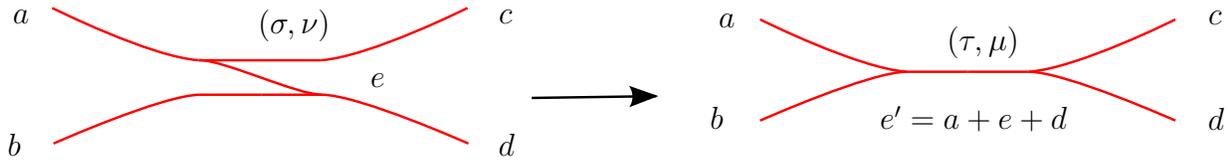, width=\textwidth}
	\caption{\label{fold} Folding a small branch. }
	\end{center}
\end{figure} 

Two measured train tracks $(\tau,\mu),(\tau',\mu')$ are {\it equivalent} if there
is a sequence of train tracks obtained by splits, shifts, folds and isotopies which interpolates
between the two. 

A {\it maximal splitting} of $(\tau, \mu)$ is a splitting along all of the large branches
of $\tau$ with maximal $\mu$ weight. 
We denote $(\tau_0,\mu_0) \split (\tau_1,\mu_1)$ if $(\tau_1,\mu_1)$
is obtained from $(\tau_0,\mu_0)$ by a single maximal splitting. If we have
$n$ maximal splittings $(\tau_0,\mu_0)\split (\tau_1,\mu_1) \cdots \split (\tau_n,\mu_n)$,
then we denote $(\tau_0,\mu_0)\split^n (\tau_n,\mu_n)$, or if we don't want to specify $n$, we
will use $\split^*$. 

A \emph{trainpath} on a train track $\tau$ is a
$C^1$-immersion $\rho:[m,n]\to \tau\subset \S$
which maps each interval
$[k,k+1]$ $(m\leq k\leq n-1)$ onto a branch of $\tau$. The integer
$n-m$ is then called the \emph{length} of $\rho$.
 Each complementary region of $\tau$
is bounded by a finite number of trainpaths
which either are simple closed curves or terminate
at the cusps of the region. A lamination is {\it minimal} if every half-leaf is dense.

\begin{lemma} \label{edgesplit}
Let $\L$ be a minimal lamination, and let $\L$ be suited to $(\tau,\mu)$.  Let $e\subset \tau$ be a
branch of $\tau$, and let $(\tau,\mu) \split (\tau_1,\mu_1) \split (\tau_2,\mu_2) \split\cdots \split (\tau_n,\mu_n) \split \cdots$ be an infinite
 sequence of maximal splittings of $(\tau,\mu)$. Then there exists $n$ such that $(\tau_n,\mu_n) \split  (\tau_{n+1},\mu_{n+1})$
splits the branch $e$ (so that $\mu_n(e)$ is maximal weight for $\tau_n$).
\end{lemma}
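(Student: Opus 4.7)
The plan is to proceed by contradiction, supposing that $e$ is never split in the infinite maximal splitting sequence. First, observe from Figure~\ref{measuresplit} that a split only alters the weight of the split branch, leaving all other branch weights unchanged; hence $\mu_n(e)=\mu_0(e)=\mu(e)$ for every $n$, and $\mu(e)>0$ because minimality of $\L$ forces its induced measure $\mu$ to be strictly positive on every branch of any train track carrying $\L$ (an open set of vanishing transverse measure would spread, by leafwise density, to all of $\S$). Consequently the maximum weight $M_n:=\max_{e'\in\tau_n}\mu_n(e')$ obeys $M_n\ge\mu(e)>0$ for all $n$.

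The key structural fact I would establish is that in any measured train track with strictly positive weights, the maximum weight is attained only by \emph{large} branches. Indeed, at a switch with large half-branch $a$ and small half-branches $b,c$, the switch condition $\mu(a)=\mu(b)+\mu(c)$ together with $\mu(b),\mu(c)>0$ yields $\mu(a)>\mu(b)$ and $\mu(a)>\mu(c)$. Thus any small or mixed branch $f$ has a switch at which it appears as a small half-branch, and the large branch at that switch strictly dominates $\mu(f)$, so $f$ cannot attain the global maximum. Applied to the maximal split $(\tau_n,\mu_n)\split(\tau_{n+1},\mu_{n+1})$, every branch of weight $M_n$ is then large and gets split, dropping strictly from $c+d=M_n$ to $c-a<M_n$ (as in Figure~\ref{measuresplit}), while every unsplit branch retains its weight $<M_n$. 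Therefore $M_{n+1}<M_n$, so $(M_n)$ is strictly decreasing and bounded below by $\mu(e)$.

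The main obstacle is to upgrade this strict decrease into a contradiction by showing that $M_n$ must actually attain the value $\mu(e)$ in finitely many steps---for then $e$ itself is a maximum-weight (hence large) branch that must be split, contradicting the standing assumption. To do this I would invoke a finiteness/recurrence argument: train tracks on $\Sigma$ have a uniformly bounded number of branches, so only finitely many combinatorial types can occur in the sequence $(\tau_n)$; along any combinatorial type $T$ recurring infinitely often, the minimality of $\L$ pins the $\L$-induced weight vectors to a low-dimensional subcone of $W(T)$, so the attainable values of $M_n$ are discrete within any interval bounded away from $0$. Combined with $M_{n+1}<M_n$ and $M_n\ge\mu(e)$, this forces $M_n$ to reach $\mu(e)$ in finite time, completing the contradiction. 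The subtlety of this last step---controlling that the strict decrements are not arbitrarily small---is the crux of the argument and the only place the minimality hypothesis on $\L$ is used in an essential way beyond ensuring positivity of the weights.
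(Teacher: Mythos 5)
Your steps establishing that the maximal weight $M_n$ is strictly decreasing are correct and are a genuinely different observation from anything in the paper's proof: weights of unsplit branches are preserved, positivity of the induced measure forces the switch inequality $\mu(a)>\mu(b),\mu(c)$ to be strict, so the maximum is attained only at large branches, all of which are split in a maximal splitting, and the new branch has weight $|c-a|<M_n$. The gap is in your final step. A strictly decreasing sequence bounded below by $\mu(e)>0$ is not by itself a contradiction, and your proposed rescue --- that the attainable values of $M_n$ are ``discrete within any interval bounded away from $0$'' --- is unjustified and in fact false in general. Since folding acts on weight vectors by integer elementary matrices, each $\mu_n(f)$ is an integer linear combination of the initial weights $\mu(f_1),\dots,\mu(f_e)$; hence the values $M_n$ can range over the subgroup of $\RR$ generated by the initial weights, which is dense whenever those weights are rationally independent. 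Recurrence of a combinatorial type does not restrict the weight vector to a discrete set either (the $\L$-induced cone is still a continuum, and for non-uniquely-ergodic or non-periodic minimal laminations there is no rescaling symmetry to exploit). So nothing in your argument rules out $M_n$ decreasing to a limit $\geq\mu(e)$ without ever equalling it, and the decrements $M_n-M_{n+1}$ admit no uniform lower bound.

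The paper closes this gap by using minimality in a much stronger way than positivity of weights: since every half-leaf is dense, for each cusp $c$ of $\tau$ there is a trainpath $\rho_c$ following the leaf emanating from that cusp and terminating on the branch $e$. A maximal split occurs at a large branch adjacent to two cusps and shortens each of the corresponding trainpaths by one, so the total length of the finitely many paths $\rho_c$ (a nonnegative integer) drops by $2$ at every stage; hence after finitely many maximal splittings some cusp reaches $e$ and $e$ is split. If you want to keep your weight-monotonicity framework, you would still need an integer-valued complexity of this combinatorial kind to convert ``strictly decreasing'' into ``terminates,'' which is exactly the content of the paper's trainpath argument.
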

\begin{proof}
We may assume that we have a map
$f:\S\to \S$ such that $f(\L)=\tau$ since $\L$ is suited to $\tau$. For any half-leaf  $l\subset \L$, since
$\L$ is minimal, $f(l)$ must eventually cross the branch $e$. For each cusp of $\tau$, there are two 
half-leaves of $\L$ which are parallel for all time (since $\L$ is suited to $\tau$) whose start
is adjacent to the cusp (these leaves are the ideal arcs corresponding to the cusp in the complementary
ideal polygon region of $\L$). For each cusp $c$ of 
$\tau$, let $\rho_c: [n,m]\to \tau$ be a trainpath which is parallel to the path $f(l)$ emanating from
the cusp $c$, such that $\rho_c([m-1,m])=e$. Each time we split $\tau_k \split \tau_{k+1}$ at a branch
adjacent to a cusp $c$, the trainpath $\rho_c$ shrinks to $\rho_c: [n-1,m]\to \tau$. 
Thus, at each stage of splitting, the total length of such cusp paths decreases by $2$ (since
each splitting branch is adjacent to two cusps), and thus we see that eventually we must split the branch
$e$. 
\end{proof}

\section{Splitting sequences} \label{splitting sequences}
We state some basic results about measured train tracks and measured laminations.

\begin{theorem}  \cite[Prop. 8.9.2]{Th}, \cite[Theorem 4.1]{PP87}
If $\L$ is a measured lamination, then $\L$ is suited to a measured train track $(\tau,\mu)$.
\end{theorem}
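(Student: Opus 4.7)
The plan is to build $(\tau,\mu)$ as the quotient of a small fibered neighborhood of a geodesic realization of $\L$. First I would fix a complete finite area hyperbolic metric on $\S$ and isotope $\L$ so its leaves are geodesics. For sufficiently small $\e>0$, the $\e$-neighborhood $N=N_\e(\L)$ is an embedded fibered neighborhood, carrying a foliation by arcs (``ties'') transverse to $\L$. All but finitely many ties meet $\partial N$ in exactly two points; cutting $N$ along the finitely many ``singular ties'' (those meeting $\partial N$ in three points, one for each cusp of each complementary region that has been pushed into $N$) decomposes $N$ into finitely many rectangles, each foliated by parallel regular ties and each traversed by a parallel family of leaf-arcs of $\L$.

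Next I would define a map $f:\S\to\S$, homotopic to the identity, by collapsing each tie of $N$ to a point and extending by the identity outside $N$. The quotient $\tau=f(N)$ is a $1$-complex in $\S$: each rectangle collapses to a branch, and at each singular tie three branches meet. Equipping $\tau$ with the smooth structure inherited from the common tangent direction to the leaves of $\L$ at each tie, the meeting points become genuine trivalent switches and the tangent-space condition at switches holds by construction. Since $f$ is non-singular on tangent vectors to leaves and $f(\L)=\tau$, the lamination $\L$ is suited to $\tau$. Finally, define $\mu(e)$ to be the $\L$-transverse measure of any tie crossing the rectangle corresponding to $e$; this is well-defined by holonomy invariance of the transverse measure within a rectangle. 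At each switch, the large half-branch's tie splits into the two small half-branches' ties, so the switch condition $\mu(a)=\mu(b)+\mu(c)$ follows from additivity of the transverse measure.

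The main obstacle is confirming that the combinatorial object $\tau$ satisfies the honest train track definition, in particular that each complementary region $R$ of $\tau$, when doubled along $\partial R$ with cusps removed, has negative Euler characteristic. This reduces to the geometry of the complementary regions of $\L$ in the hyperbolic metric: each such region is either an ideal polygon with at least three ideal vertices or a once-punctured region with the puncture on the boundary, and the collapse $f$ preserves the topological type of complementary regions, matching cusps of $\tau$ with ideal vertices of the polygons. The Euler characteristic estimate then follows from a standard case-check on the combinatorial type of $R$. A minor side issue is the case in which $\L$ has isolated closed leaf components: near such a component one replaces the annular portion of $N$ by a small ``dumbbell'' train track carrying the closed leaf with its given weight, and then glues this back into the construction above.
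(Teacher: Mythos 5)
Your construction --- geodesic realization, collapse of the ties of a fibered $\e$-neighborhood, and pushforward of the transverse measure --- is correct and is exactly the standard argument behind the cited results; the paper itself states this theorem as background with references to Thurston and Papadopoulos--Penner and gives no proof of its own. Your flagged points (the Euler characteristic check on doubled complementary regions and the special handling of closed leaf components) are indeed the only places requiring care, and you address them appropriately.
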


\begin{theorem} \label{split1} \cite[Theorem 2.8.5]{PennerHarer92}
Measured train tracks  $(\tau_i, \mu_i)$  give rise to the
same measured lamination $\L$ (up to isotopy) if and only 
if they are equivalent. 
\end{theorem}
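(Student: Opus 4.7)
The plan is to treat the two directions separately. The forward direction, that equivalent measured train tracks carry the same lamination, should follow by verifying each of the four moves individually. For splits this is exactly the remark preceding the statement: if $\L$ is suited to $(\tau,\mu)$ and $(\tau,\mu)\split_e(\sigma,\nu)$ then $\L$ is suited to $(\sigma,\nu)$, and the transverse measure pulled back from $\L$ still satisfies the switch conditions on $\sigma$. Folds are the inverses of splits, so they preserve the carried lamination for the same reason. Shifts at a mixed branch require a local check: the local model is a topological rearrangement of three branches, and any lamination suited to the unshifted track is also suited to the shifted one by a small $C^1$ homotopy of the carrying map, with weights determined by the switch conditions. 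Isotopies are automatic.

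For the reverse direction, suppose $(\tau_1,\mu_1)$ and $(\tau_2,\mu_2)$ both carry the same $\L$ with carrying maps $f_i : \S \to \S$. The plan is to produce a common refinement that both tracks reduce to via folds and shifts. I would first assume $\L$ is minimal (the general case should reduce to the minimal case by working one minimal sublamination at a time, using product neighborhoods). Under this assumption, applying Lemma \ref{edgesplit} iteratively ensures that in any maximal splitting sequence every branch eventually gets split, so after sufficiently many splits the complementary regions of $(\tau_i^{(n)},\mu_i^{(n)})$ become arbitrarily thin neighborhoods of the complementary regions of $\L$.

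Concretely, I would build a candidate common track $\tau^{\ast}$ as a thin train-track regular neighborhood of $\L$ obtained by prescribing a consistent smoothing at each cusp of each complementary region, and equip it with the measure induced by $\L$. The maps $f_i$ then factor (up to isotopy and shifts) as $\S \to \tau^{\ast} \to \tau_i$, where the second map is a carrying map realized by fold sequences. Splitting each $(\tau_i,\mu_i)$ enough times so that its complementary regions are contained in the chosen neighborhood of $\L$ lets me identify the result with $(\tau^{\ast},\mu^{\ast})$ up to shifts and isotopies, which assembles into the desired equivalence between $(\tau_1,\mu_1)$ and $(\tau_2,\mu_2)$.

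The main obstacle is the global bookkeeping in the last step: making sure that the local identifications between the sufficiently-split $\tau_i^{(n)}$ and the neighborhood model $\tau^{\ast}$ patch together coherently into a global finite sequence of shifts, folds, and isotopies. In particular, one must handle the combinatorics at the cusps carefully, since different smoothings there differ by shifts, and one has to track how the measure propagates through the fold sequence so that the switch conditions are maintained at every intermediate track. This bookkeeping, rather than any deep new idea, is where the argument in \cite{PennerHarer92} does its real work, and my proof would follow that strategy.
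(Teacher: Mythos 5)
First, a point of comparison: the paper does not prove this statement at all --- it is quoted verbatim from Penner--Harer \cite[Theorem 2.8.5]{PennerHarer92} and used as a black box, so there is no in-paper argument to measure your proposal against. Judged on its own terms, your forward direction is fine: each of the four moves visibly preserves the carried measured lamination and its transverse measure, which is exactly the remark the paper makes after defining splits and folds.

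The reverse direction, however, has a genuine gap at the step where you identify both sufficiently-split tracks with a single neighborhood model $\tau^{\ast}$ ``up to shifts and isotopies.'' There is no canonical thin train-track neighborhood of a minimal lamination in the sense you need: two tracks that both lie in an arbitrarily thin neighborhood of $\L$ and both carry it can still differ by genuine splittings (roughly, by how far the spikes of the complementary regions have been unzipped), not merely by shifts and isotopies. Concretely, if $(\tau_2,\mu_2)$ is obtained from $(\tau_1,\mu_1)$ by a single split at a non-maximal branch $e$, then after $n$ further maximal splits the two resulting tracks still differ by essentially one split --- this is exactly the ladder diagram in the proof of Corollary \ref{spliteq} --- and they merge only when $e$ becomes maximal, not because both have become ``thin.'' So the existence of your $\tau^{\ast}$, and the factorization of the carrying maps through it by fold sequences, is precisely the assertion being proved rather than a construction. (Promoting a carrying $\tau^{\ast}\to\tau_i$ to an actual fold sequence is itself a nontrivial piece of the Penner--Harer machinery.) The mechanism that actually works is a common-splitting/commutation argument on splitting sequences, of the kind appearing in Theorem \ref{split2} and Corollary \ref{spliteq}: one shows directly that measured tracks carrying the same $\L$ admit a common splitting by a diagonal argument, rather than by matching each to a fixed model. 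Your closing paragraph correctly locates the difficulty but proposes the wrong tool for resolving it.
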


Together, these theorems imply that  equivalence classes of measured train tracks
are in one-to-one correspondence to measured laminations. Equivalence classes of measured
train tracks are generated by splitting, shifting, and folding. The next theorem implies that shifting
is not needed. 

\begin{theorem} \label{split2} \cite[Theorem 2.3.1]{PennerHarer92}
If $(\tau_i, \mu_i)$ are equivalent positively measured train tracks, then there is
a train track $(\tau, \mu)$ such that $(\tau_i,\mu_i)$ splits (and isotopes) to $(\tau,\mu)$
for $i=1,2$.  
\end{theorem}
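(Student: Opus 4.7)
The plan is to prove this confluence result by reducing the equivalence to a zig-zag of splits and folds, and then collapsing the zig-zag via a diamond property for splits.

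\textbf{Step 1 (reduce to splits and folds).} Equivalence of measured train tracks is generated by splits, folds, shifts, and isotopies. I would first show that shifts are redundant: a shift at a mixed branch can be realized as a fold against one of the adjacent small branches followed by a split at the resulting large branch. Absorbing isotopies into the identification of branches, the equivalence becomes a zig-zag
\[
(\tau_1,\mu_1) = \alpha_0 \leftrightarrow \alpha_1 \leftrightarrow \cdots \leftrightarrow \alpha_n = (\tau_2,\mu_2),
\]
where each arrow is either a split or its inverse (a fold). The desired conclusion is precisely the statement that this zig-zag may be rewritten as a single valley $(\tau_1,\mu_1) \split^* (\tau,\mu) \leftarrow^* (\tau_2,\mu_2)$, since a fold sequence from the right is the reverse of a splitting sequence.

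\textbf{Step 2 (diamond property for splits).} The heart of the argument is the confluence lemma: if $(\tau,\mu) \split (\sigma_1,\nu_1)$ at a large branch $e_1$ and $(\tau,\mu) \split (\sigma_2,\nu_2)$ at a large branch $e_2 \neq e_1$, then there is a common measured splitting descendant $(\rho,\pi)$. A key structural point is that at each trivalent switch only one half-branch lies on the large side of the tangent space, so two distinct large branches cannot share a switch. Hence the local modifications caused by the splits at $e_1$ and $e_2$ occur in disjoint regions of the track, though the induced measures on nearby branches may interact. A finite local case analysis, using positivity of $\mu$ to rule out degenerate splits, produces the common descendant.

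\textbf{Step 3 (collapse the zig-zag).} I would then eliminate each peak $\alpha_{i-1} \leftarrow \alpha_i \split \alpha_{i+1}$ by applying the diamond lemma to produce a valley $\alpha_{i-1} \split^* \rho \leftarrow^* \alpha_{i+1}$, splicing this back into the zig-zag. A Church--Rosser style induction, for instance on the number of remaining peaks or the position of the leftmost peak, yields the V-shape $(\tau_1,\mu_1) \split^* (\tau,\mu) \leftarrow^* (\tau_2,\mu_2)$, which is the desired conclusion by Theorem \ref{split1} (both sides carry the same lamination throughout, so the endpoint measured tracks are actually equal and not just equivalent).

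The main obstacle is the diamond property of Step 2. Even though large branches cannot share switches, performing a split can change which neighboring branches are large or small, so a single split on each side may not always suffice to reach a common track and one must verify that a bounded number of further splits always resolves the discrepancy. A related subtlety is ensuring termination of the induction in Step 3; this may force a strong form of the diamond lemma that uniformly bounds the number of additional splits needed to close each peak, since splitting sequences along a minimal lamination do not terminate and one cannot rely on Newman's lemma alone.
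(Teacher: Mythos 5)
The genuine gap is in your Step 1. The claim that ``a shift at a mixed branch can be realized as a fold against one of the adjacent small branches followed by a split at the resulting large branch'' is unjustified, and it is false as a local statement: a mixed branch need not have any small branch adjacent to it (all neighboring branches may be mixed or large), so there may be no fold available near the shift locus at all; and even when an adjacent small branch happens to exist, folding it also modifies the track near that branch's other end, so the composite fold--split is not supported in the shift locus and does not reproduce the slid track in general. In fact, the assertion that shifts are redundant --- that two tracks differing by a shift are joined by splits and folds alone --- is essentially the shift case of the theorem itself (it follows \emph{from} the conclusion, since equivalent tracks have a common splitting), so assuming it at the outset is circular. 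This is precisely the case that needs a real argument: in the treatment sketched in the paper (following Penner--Harer), one applies the same sequence of maximal splittings to the two tracks related by a shift; the two sequences remain related by a shift until the middle incoming branch $b$ becomes a branch of maximal weight, at which point the two resulting tracks coincide (Figure \ref{shift2}), and one then concludes using the fact that ``having a common splitting'' is an equivalence relation, proved by the commutation technique of Corollary \ref{spliteq}.

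By contrast, Steps 2 and 3, restricted to splits and folds, are essentially sound, and the obstacle you flag in Step 2 can be resolved: each switch carries exactly one large half-branch, so two distinct large branches share no switch; a split at $e_1$ modifies only the two switches of $e_1$ and the branch $e_1$ itself, leaving every other branch and every other weight unchanged (the branches reattached by the split meet $e_1$'s switches in small half-branches, so none of them is a large branch); hence a second large branch $e_2$ remains large with unchanged weight and unchanged neighboring weights, and the diamond closes with a single split on each side. When $e_1=e_2$ the measured split is deterministic and the two outcomes coincide. This one-step (strong) diamond property yields confluence of the splitting relation without any termination hypothesis, so your worry about Newman's lemma is moot --- this is in effect the same commutation of disjoint splits that the paper exploits in Corollary \ref{spliteq}. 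So the Church--Rosser skeleton is fine, but the theorem does not follow from your argument until the shift case is handled by something like the maximal-splitting argument above rather than by the claimed local identity.
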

This theorem is proven by demonstrating that two train tracks which are equivalent
by a split, shift, or fold, have common splittings (having a common splitting is not 
hard to show is an equivalence relation using the technique of Corollary \ref{spliteq}). 
For splits and folds this is clear. For two train
tracks related by a shift, one performs the same sequence of splits of maximal edges
to a sequence of two train tracks related by shifts 
to see that once the middle incoming edge $b$ becomes maximal, the
two resulting train tracks are the same (see Figure \ref{shift2}). 

\begin{figure}[htb] 
	\begin{center}
	\psfrag{a}{}
	\psfrag{b}{$b$}
	\psfrag{c}{}
	\psfrag{d}{}
	\psfrag{e}{}
	\psfrag{f}{$\downharpoonleft$}
	\psfrag{g}{$\overset{shift}{\rightarrow}$}
	\psfrag{h}{$=$}
	\epsfig{file=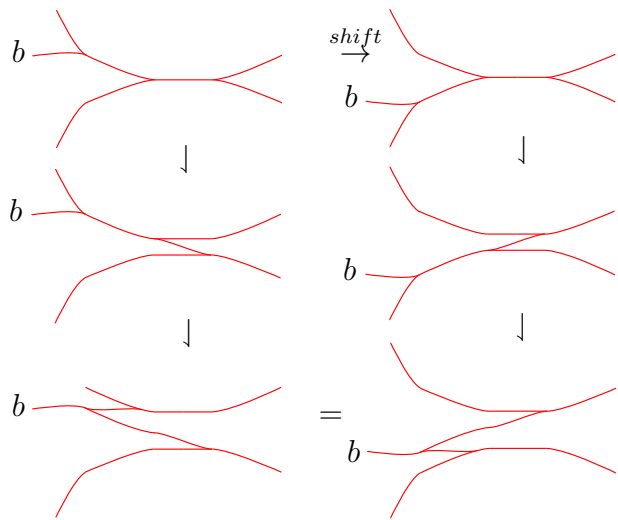, width=.5\textwidth}
	\caption{\label{shift2} The effect of shifting then splitting.}
	\end{center}
\end{figure} 

\begin{cor} \label{spliteq}
Suppose $\L$ is suited to  $(\tau, \mu)$, $(\tau',\mu')$. Then 
there exists $(\tau'',\mu'')$ such that $(\tau,\mu)\split^* (\tau'',\mu'')$ and $(\tau',\mu')\split^* (\tau'',\mu'')$. 
\end{cor}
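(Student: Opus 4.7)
The plan is to combine Theorems \ref{split1} and \ref{split2} with a confluence (diamond-lemma) argument that upgrades arbitrary splittings to maximal splittings.

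\textbf{Step 1 (Reduction to common arbitrary-split descendant).} Since $\L$ is suited to both $(\tau,\mu)$ and $(\tau',\mu')$, Theorem \ref{split1} gives that these two measured train tracks are equivalent. Applying Theorem \ref{split2}, there exists a measured train track $(\sigma,\nu)$ such that both $(\tau,\mu)$ and $(\tau',\mu')$ reach $(\sigma,\nu)$ via sequences of (possibly non-maximal) splits. The remaining task is to promote these two arbitrary splitting sequences to maximal splitting sequences ending at a common train track.

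\textbf{Step 2 (Key confluence claim).} The heart of the proof is the following claim: for any measured train track $(\pi,\rho)$ reached from $(\tau,\mu)$ by an arbitrary sequence of splits of length $n$, the maximal splitting sequence $(\tau,\mu)\split (\tau_1,\mu_1)\split\cdots$ eventually passes through a splitting descendant of $(\pi,\rho)$. I would prove this by induction on $n$. The base case is trivial. For the inductive step, consider the first arbitrary split $(\tau,\mu)\to(\bar\tau,\bar\mu)$ at a large branch $e$, and compare with the first maximal split $(\tau,\mu)\split (\tau_1,\mu_1)$, which splits the set $E$ of all maximum-weight large branches simultaneously. If $e\in E$, the maximal split subsumes the single split at $e$: $(\tau_1,\mu_1)$ is obtained from $(\bar\tau,\bar\mu)$ by performing the commuting splits at the remaining branches in $E\setminus\{e\}$, so one applies induction from $(\tau_1,\mu_1)$ to the image of $(\pi,\rho)$ under these further splits. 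If $e\notin E$, then $e$ is disjoint from every branch in $E$ (since distinct large branches at the same switch cannot both have maximum weight simultaneously, and splits on large branches sharing no switch commute), so the split at $e$ and the maximal split commute, and the inductive hypothesis applies to the shorter residual sequence out of $(\tau_1,\mu_1)$. Lemma \ref{edgesplit} plays a supporting role here by guaranteeing that branches which need to be split in the non-maximal sequence eventually become maximum-weight in the maximal sequence, so the induction cannot stall.

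\textbf{Step 3 (Conclusion).} Apply the claim to the sequence $(\tau,\mu)\to^*(\sigma,\nu)$ to find $N$ with $(\tau_N,\mu_N)$ a splitting descendant of $(\sigma,\nu)$; and apply it similarly to $(\tau',\mu')\to^*(\sigma,\nu)$ to find $N'$ with $(\tau'_{N'},\mu'_{N'})$ a splitting descendant of $(\sigma,\nu)$. Now apply the claim one more time inside the maximal splitting sequence from $(\sigma,\nu)$ itself: both $(\tau_N,\mu_N)$ and $(\tau'_{N'},\mu'_{N'})$ lie on, or are ancestors of, the maximal splitting sequence issuing from $(\sigma,\nu)$ in the sense guaranteed by Step 2. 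Since maximal splitting is deterministic, the moment the two maximal sequences visit a common measured train track, they coincide from that point onward. Taking $(\tau'',\mu'')$ to be the first such common track (or any later term) produces the required common maximal splitting descendant.

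\textbf{Main obstacle.} The substantive work is in Step 2: verifying the commutation/diamond property for splits with full care. One must track what happens at collisions (when the weights satisfy $\mu(a)=\mu(c)$, so that the new branch $e'$ is absent and a branch disappears) and at shared switches (where one split can alter whether an adjacent branch remains a large branch), and check that the ``commuting splits'' really do commute at the level of measured train tracks up to isotopy, not merely up to the underlying train-track combinatorics.
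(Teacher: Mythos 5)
Your proposal is correct and follows essentially the same route as the paper: reduce via Theorem \ref{split2} to comparing splitting descendants, then use the key observation that splitting a non-maximal large branch $e$ commutes with maximal splits (since the only new large branches created are adjacent to $e'$ and have weight less than $\mu(e)$), with Lemma \ref{edgesplit} guaranteeing termination and determinism of maximal splitting forcing the sequences to merge. The only organizational difference is that the paper first invokes transitivity of ``having a common maximal splitting descendant'' to reduce to the case where $(\tau,\mu)$ and $(\tau',\mu')$ differ by a single split, which cleans up the bookkeeping that your induction on the length $n$ of the arbitrary sequence has to carry along.
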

\begin{proof}
The property of having a common maximal split is  transitive, so
by Theorem \ref{split2}, we need only assume that  $(\tau,\mu)\split_e (\tau',\mu')$ for
some large branch $e\subset \tau$. If $e$ is the sole branch of maximal weight of $(\tau,\mu)$,
then we have $(\tau,\mu)\split (\tau',\mu')=(\tau'',\mu'')$ (a maximal split). If there are other branches of $(\tau,\mu)$
of maximal weight $=\mu(e)$, then we have   $(\tau,\mu)\split (\tau'',\mu'')$
and $(\tau',\mu')\split (\tau'',\mu'')$, where this maximal splitting is along
the edges of $\tau'$ which have the same $\mu'$ weight as $\mu(e)$.

Otherwise, assume that $\mu(e)$ is not a maximal weight of $\mu$. Let $(\tau,\mu)=(\tau_1,\mu_1) \split (\tau_2,\mu_2) \split \cdots $ 
be the maximal splitting sequence. By Lemma \ref{edgesplit}, there exists $n$ such that $\mu_n(e)$ is a maximal weight of $\mu_n$. 

We claim that $(\tau_1,\mu_1) \split^{n} (\tau_{n+1},\mu_{n+1})$ and $(\tau',\mu')\split^{n-1} (\tau_{n+1},\mu_{n+1})$. 
We'll assume that $(\tau'_i,\mu'_i)\split (\tau'_{i+1}, \mu'_{i+1})$, for $i=2,\ldots, n$, where $\tau'_2=\tau'$. 
By induction, we prove that $(\tau_i,\mu_i)\split_e (\tau'_{i+1},\mu'_{i+1})$ for $i<n$. Once $e$ becomes a maximal weight edge of $(\tau_n,\mu_n)$,
there may be other maximal edges of $(\tau_n,\mu_n)$ of the same weight as $\mu_n(e)$, but we will have $(\tau_n,\mu_n)\split (\tau_{n+1},\mu_{n+1})=(\tau'_{n+1},\mu'_{n+1})$. 
$$
\begin{array}{ccccc}
(\tau_1,\mu_1) &\split &(\tau_2,\mu_2) & \split \cdots \split & (\tau_n,\mu_n) \\
\downharpoonleft_e & & \downharpoonleft_e & & \downharpoonleft_e   \\
(\tau'_2,\mu'_2)  & \split & (\tau'_3,\mu'_3) & \split \cdots \split & (\tau_{n+1},\mu_{n+1})
\end{array}
$$

Let $e'$ be the new edge of $\tau'_2$. 
\begin{figure}[htb] 
	\begin{center}
	\psfrag{a}{$e$}
	\psfrag{c}{$\split_e$}
	\psfrag{b}{$e'$}
	\psfrag{d}{$c$}
	\epsfig{file=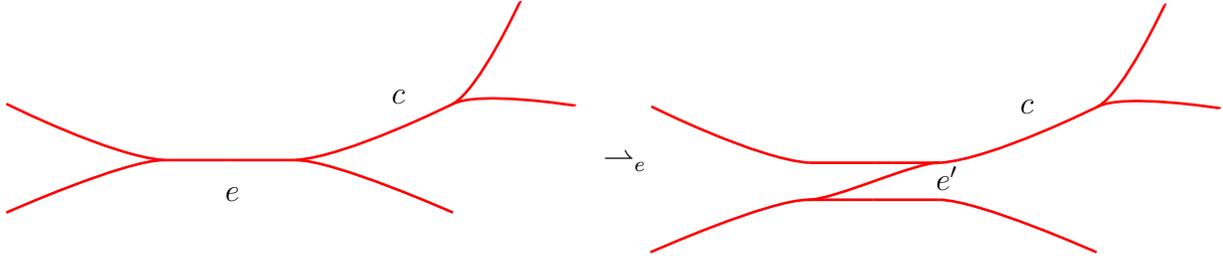, width=\textwidth}
	\caption{\label{split} Splitting a large branch to get a new large branch $c<e$.}
	\end{center}
\end{figure} 
To see the induction, we prove that if $\mu_i(e)$ is not maximal weight, then $\tau_i$ and $\tau'_{i+1}$ 
agree outside of a neighborhood of $e$ and $e'$ consisting of the edges incident with $e$. The 
point is that after splitting $e$, the only new large branches $c$ that may appear must be incident
with $e'$ (see Figure \ref{split}). However, the weight of these edges is $\mu_i(c)<\mu_i(e)$, and therefore they will not be maximal weight
in $(\tau'_{i+1}, \mu'_{i+1})$ if $e$ is not maximal weight in $(\tau_i,\mu_i)$. Thus, the maximal splittings of $\tau_i,\tau'_{i+1}$
will occur at the same edges, which will be disjoint from the edges adjacent to $e$ and $e'$, respectively. 
So we will have $\tau_{i+1}\split_e \tau'_{i+2}$. 
\end{proof}

Now, we observe that if $\L$ is suited to  $(\tau,\mu)$, then it will also be suited to $(\tau'',\mu'')$. The argument
in Corollary \ref{spliteq} can be used to show that any sequence of splits of a measured train track may be
arranged by a sequence of commuting splits to be in maximal order. 

\begin{theorem} \label{periodicsplitting}
If $\varphi: \S \to \S$ is a pA map, with stable lamination $\L^s$, and $(\tau,\mu)$ is suited to $\L^s$,
then there exists $n,m$ such that 
$$(\tau,\mu) \split^n (\tau_n,\mu_n) \split^m (\tau_{n+m},\mu_{n+m}),$$
and $\tau_{n+m} = \varphi(\tau_n)$ and $\mu_{n+m} = \lambda(\varphi)^{-1} \varphi(\mu_n)$. 
\end{theorem}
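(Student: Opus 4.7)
The plan is to run two maximal splitting sequences in parallel: the given one starting at $(\tau,\mu)$, and the one starting at $(\varphi(\tau),\lambda^{-1}\varphi(\mu))$. Since $\varphi(\L^s)=\lambda\L^s$ as measured laminations, rescaling gives $\lambda^{-1}\varphi(\L^s)=\L^s$, so $(\varphi(\tau),\lambda^{-1}\varphi(\mu))$ is again suited to $\L^s$. Maximal splittings commute with both homeomorphisms of $\S$ and with scalar rescaling of the transverse measure (neither operation changes which large branches attain the maximum weight), so the second sequence is nothing but $(\varphi(\tau_n),\lambda^{-1}\varphi(\mu_n))_{n\geq 0}$.

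Applying Corollary \ref{spliteq} to the pair $(\tau,\mu)$ and $(\varphi(\tau),\lambda^{-1}\varphi(\mu))$ yields integers $a,b\geq 0$ together with a measured train track reached from both by $a$ and $b$ maximal splittings respectively. By the determinism of maximal splitting, this common train track is $(\tau_a,\mu_a)$ as seen from the first sequence and $(\varphi(\tau_b),\lambda^{-1}\varphi(\mu_b))$ as seen from the second, giving
\[
(\tau_a,\mu_a)\;=\;(\varphi(\tau_b),\,\lambda^{-1}\varphi(\mu_b)).
\]
Setting $n=b$ and $m=a-b$ then yields the desired identity, provided I can show $a>b$.

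Showing $a>b$ is the main step, and for this I use the monotonicity of the total weight $\|\mu\|_1:=\sum_{e\subset\tau}\mu(e)$ along maximal splitting sequences. A direct computation on the split in Figure \ref{measuresplit}, with $e=a+b=c+d$ and $a<c$, shows that replacing $e$ by $e'=c-a$ changes the total weight by $\mu(e')-\mu(e)=(c-a)-(c+d)=-(a+d)<0$; since distinct large branches cannot share a switch, a maximal splitting performs these moves independently and strictly decreases $\|\cdot\|_1$ at every step. On the other hand $\varphi$ permutes branches, so $\|\varphi(\mu_b)\|_1=\|\mu_b\|_1$ and hence $\|\lambda^{-1}\varphi(\mu_b)\|_1=\lambda^{-1}\|\mu_b\|_1$, and the identification above forces $\|\mu_a\|_1=\lambda^{-1}\|\mu_b\|_1$. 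If one had $a\leq b$, monotonicity together with $\lambda>1$ would give
\[
\|\mu_a\|_1\;\geq\;\|\mu_b\|_1\;>\;\lambda^{-1}\|\mu_b\|_1\;=\;\|\mu_a\|_1,
\]
a contradiction; so $a>b$, and the conclusion follows. The only real obstacle is conceptual rather than technical: one must notice that rescaling by $\lambda^{-1}$ compensates for the $\lambda$-expansion of $\L^s$ under $\varphi$, which brings Corollary \ref{spliteq} to bear, after which $\lambda>1$ automatically forces the $\varphi$-shifted sequence to lag behind the original so the meeting point becomes a genuine period.
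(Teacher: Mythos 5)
Your proposal is correct and follows essentially the same route as the paper: both observe that $(\tau,\mu)$ and $(\varphi(\tau),\lambda^{-1}\varphi(\mu))$ are equivalent measured train tracks suited to $\L^s$, apply Corollary \ref{spliteq} to obtain a common maximal splitting, and use the determinism and $\varphi$-equivariance of maximal splitting to identify the meeting point as $(\tau_a,\mu_a)=(\varphi(\tau_b),\lambda^{-1}\varphi(\mu_b))$. Your additional total-weight monotonicity argument showing $a>b$ (so that $m=a-b\geq 1$ and the periodicity is genuine) is a correct and worthwhile detail that the paper's proof leaves implicit.
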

\begin{proof}
We have $\varphi(\L^s) = \lambda(\varphi) \L^s$. Therefore, $(\tau,\mu)$ and $(\varphi(\tau), \lambda(\varphi)^{-1} \varphi(\mu))$ are
equivalent measured train tracks by Theorem \ref{split1}. 

Let $$(\tau,\mu)\split (\tau_1,\mu_1) \split (\tau_2,\mu_2) \split \cdots \split (\tau_n,\mu_n) \split \cdots  $$
be the sequence of maximal splits. Then we  have 
$$(\varphi(\tau),\varphi(\mu)) \split (\varphi(\tau_1),\varphi(\mu_1)) \split \cdots \split (\varphi(\tau_n), \varphi(\mu_n))\split \cdots$$
is also a sequence of maximal splits. Since $(\tau,\mu) \sim (\varphi(\tau),\lambda(\varphi)^{-1} \varphi(\mu))$, 
by Corollary \ref{spliteq} there exists $n,m$ such that $\varphi(\tau_n)=\tau_{n+m}$ and $\varphi(\mu_n)=\lambda(\varphi) \mu_{n+m}$. 
\end{proof}

To summarize, the splitting sequence $(\tau_n,\mu_n)$ is eventually periodic, modulo the action of $\varphi$ and rescaling
the measure. One can recognize this periodicity combinatorially. See the example in Figure \ref{maximal splitting}. 

\begin{rmk} \label{excluded}
We make a remark about the kind of train tracks which can arise in such periodic splitting
sequences. There can be no edge of the train track $\tau$ which is a small branch with two branches coming
off of the same side, or a monogon (see Figure \ref{excluded2}). If $\tau$ has such an edge, then it is stable under any sequence
of folds, since the vertices involved in a fold can never involve the interior of the small branch of one of these
excluded configurations, so that the branch always remains small. So this edge will never disappear, contradicting Lemma \ref{edgesplit}. Note that the isolated
monogon is the same as a one-sided small branch when pulled back to the universal cover. 
\end{rmk}

\begin{figure}[htb] 
	\begin{center}
	\subfigure[One sided small branch]{\epsfig{figure=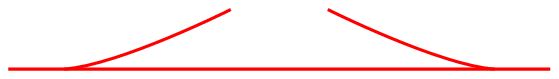,angle=0,width=.45\textwidth}}\quad
	\subfigure[Isolated monogon]{\epsfig{figure=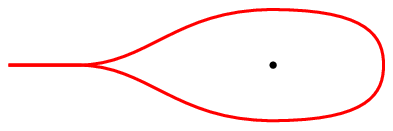,angle=0,width=.45\textwidth}}
	\caption{\label{excluded2} Excluded configurations}
	\end{center}
\end{figure}

\section{Triangulations} \label{triangulations}
An ideal triangulation $T$ of a punctured surface $\S$ is a decomposition along ideal arcs into ideal triangles. 
A {\it Whitehead move} takes any arc in $T$ which is adjacent to two distinct triangles, removes it, and replaces
it with the other diagonal of the quadrilateral (Figure \ref{Whitehead}).  
\begin{figure}[htb] 
	\begin{center}
	\epsfig{file=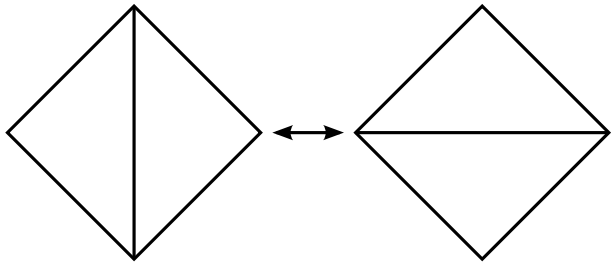, width=.5\textwidth}
	\caption{\label{Whitehead} A Whitehead move}
	\end{center}
\end{figure}

Any two ideal triangulations of $\S$ are related by Whitehead moves (see \cite[Lemma 6]{Lackenby00} for a proof). 

The {\it mapping torus} of $\varphi: \S\to \S$ is the manifold $ \T(\varphi) = \S \times [0,1] / \{ (x,0) \sim (\varphi(x),1)\}$. 
Given a pseudo-Anosov homeomorphism $\varphi:\S \to \S$, let $\S^\circ_\varphi = \S^\circ \subset \S$ be the surface obtained by
removing the singular points of the stable and unstable foliations for $\varphi$ and let $\varphi^{\circ}:\S^\circ \to \S^\circ$ denote the
restriction. 

An {\it ideal triangulation} of a  3-manifold $N$ with boundary is obtained by taking a CW complex with affinely identified
tetrahedra, such that removing the vertices gives a manifold homemorphic to the interior of $N$. A {\it taut  tetrahedron }
is a tetrahedron such that each face is assigned a coorientation, such that two faces are pointing inwards
and two are pointing outwards. Each dihedral edge of a taut tetahedron may be assigned
an angle of either $0$ or $\pi$, such that the sum of the angles around each corner of a vertex is $\pi$,
and so that each face is co-oriented in such a way that the orientations of adjacent faces change only 
along an edge of angle $0$. There is only one taut tetrahedron up to combinatorial equivalence (see Figure \ref{tauttet}(a)).
A {\it taut ideal triangulation} is an ideal triangulation of $N$ such that each triangle is assigned a coorientation
in such a way that every tetrahedron is taut, and the sum of the angles around every edge is $2\pi$ (Figure \ref{tauttet}(b)). 
One may ``pinch" the triangles together along an edge to obtain a branched
surface, so that there are cusps along the angle zero dihedral corners,
and a smooth surface between the angle $\pi$ faces of the tetrahedra.

\begin{figure}[htb] 
	\begin{center}
	\subfigure[Taut tetrahedron]{\psfig{figure=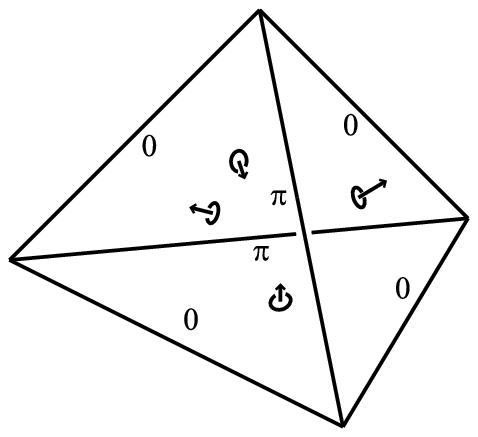,angle=0,width=.45\textwidth}}\quad
	\subfigure[Edge of taut triangulation]{\psfig{figure=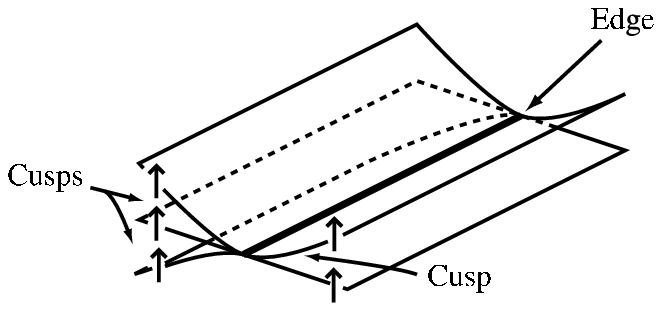,angle=0,width=.45\textwidth}}
	\caption{\label{tauttet} Conditions for a taut ideal triangulation}
	\end{center}
\end{figure} 

If we are given $\T(\varphi^{\circ})$ a mapping torus with ideal triangulation $T$ of $\S^\circ$, 
and a periodic sequence of Whitehead moves $T\to T_1 \to \cdots \to T_m = \varphi^{\circ}(T)$, we may form a taut ideal triangulation
of $\T(\varphi^{\circ})$.  Start with the triangulation $T$ of $\S^\circ$. 
We attach a tetrahedron to $\S^\circ$ along the two triangles of $T$ which are removed under
the Whitehead move to make $T_1$ (see Figure \ref{pachner}).

\begin{figure}[htb] 
	\begin{center}
	\psfig{file=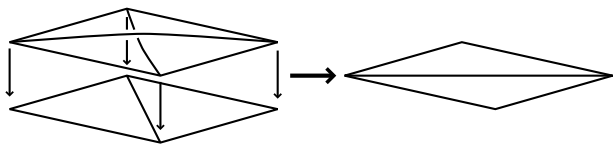, width=.5\textwidth}
	\caption{\label{pachner} Attaching a taut tetrahedron.}
	\end{center}
\end{figure} 
 The triangles are cooriented in a consistent fashion
with $\S^{\circ}$. We repeat this process until we get $T_m$, which then may be glued
to $T$ by $\varphi$. The fact that $\varphi^{\circ}$ is pseudo-Anosov guarantees that we get a
triangulation of $\T(\varphi^{\circ})$. This triangulation is called a {\it layered triangulation}.

{\bf Main Construction:}

We obtain a layered triangulation from the periodic sequence of train tracks given by Theorem \ref{periodicsplitting}.
Each train track $\tau_i$ gives a spine of $\S^{\circ}$, which is dual to a unique ideal triangulation 
$T_i$ of $\S^{\circ}$.  For each split $\tau_i \split_e \tau_{i+1}$, one has a corresponding
dual Whitehead move $T_i \to T_{i+1}$ (see Figure \ref{Whiteheaddual}). Later on, we will
also consider the reverse Whitehead move associated to a fold. 
\begin{figure}[htb] 
	\begin{center}
	\psfrag{a}{split}
	\psfrag{b}{fold}
	\epsfig{file=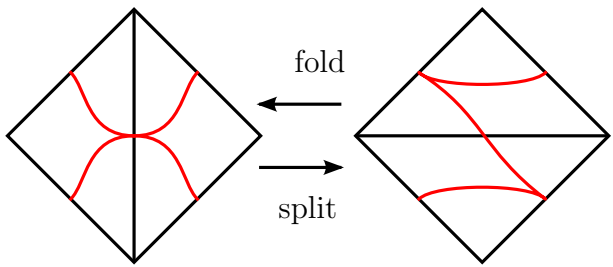, width=.5\textwidth}
	\caption{\label{Whiteheaddual} Splitting and folding}
	\end{center}
\end{figure} 
It's also clear that if we have a maximal split $\tau_i \split \tau_{i+1}$,
then this breaks up into a sequence of splits along the maximal weight branches of $\tau_i$,
and that the order in which we attach the tetrahedra does not matter since they are attached
along disjoint pairs of ideal triangles in $T_i$. 
Thus, we obtain a canonical taut ideal triangulation of $\T(\varphi^\circ)$ associated to the
mapping class $\varphi$. 

We would now like to obtain an intrinsic characterization of the taut ideal triangulations that arise
from this process.  We would like to be able to draw pictures of a taut ideal triangulation. 
In particular, we may take the preimage of the ideal triangulation in the universal cover $\tilde{\T(\varphi^\circ)}$,
and flatten it out into the universal cover of $\tilde{\S}^\circ \cong \HH^2$, in such a way
that each ideal  triangle is projected so that its coorientation agrees with that of $\tilde{\S}^\circ$. We will
draw local pictures of triangulations circumscribed by a circle corresponding to $\partial_\infty \HH^2$,
with the convention that the coorientation is pointing toward the reader.

\begin{definition}
An edge $e$ of a taut ideal triangulation is {\it left-veering} if the sequence of oriented triangles
move to the left on both sides of the edge $e$, when viewed from the edge $e$ and ordered by 
the coorientation of the triangles. Moreover, on each side of $e$ there is at least one left-veering move. Similarly, 
$e$ is right-veering if the triangles move
to the right on both sides of the edge $e$. In particular, the degree of $e$ must be at least 4 (see Figure \ref{leftveering}). 
A taut triangulation is called {\it veering} if every edge is left- or right-veering.
\end{definition}
\begin{figure}[htb] 
	\begin{center}
	\psfrag{e}{$e$}
	\subfigure[Link of a right-veering edge $e$]{\epsfig{figure=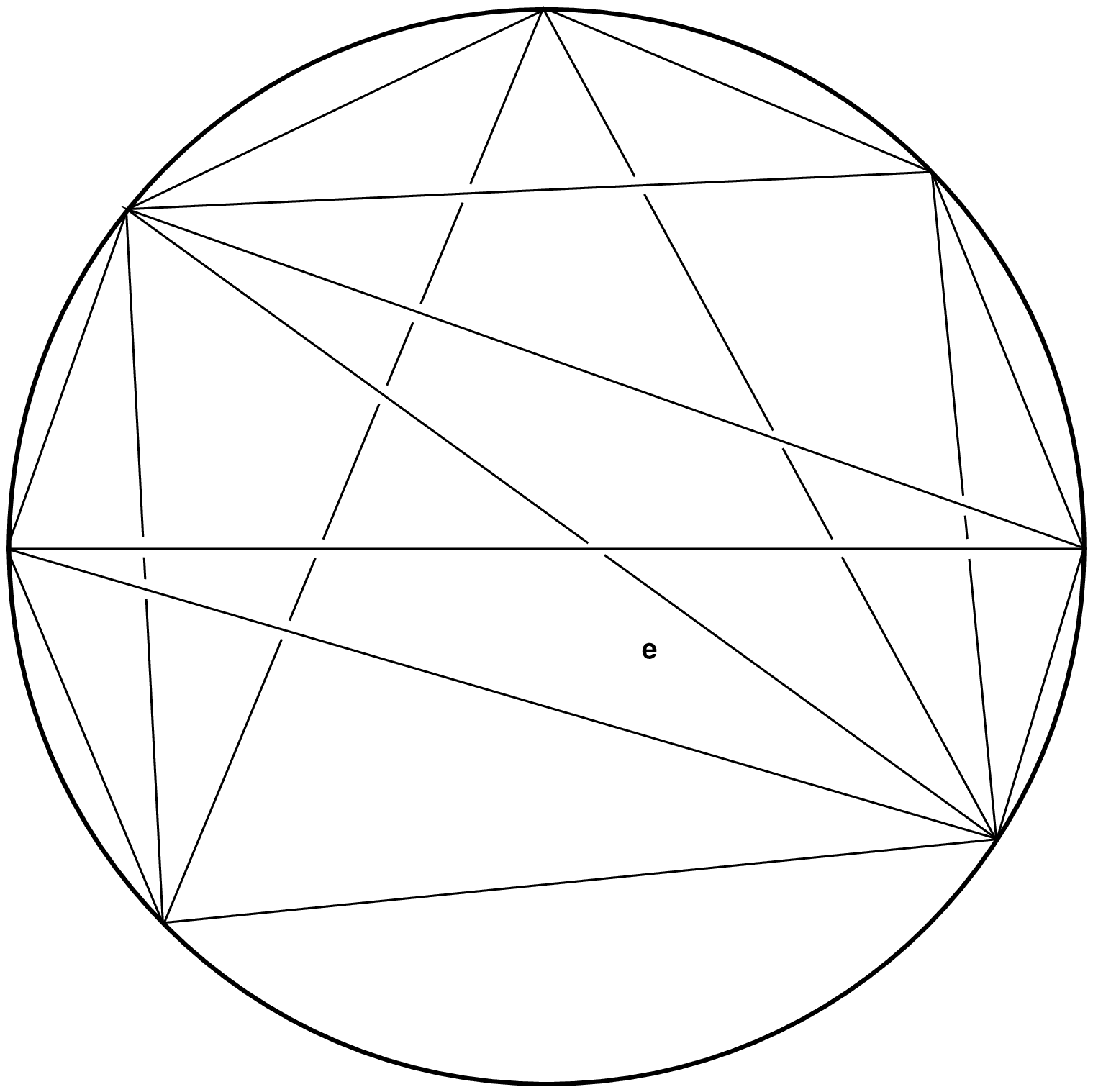,angle=0,width=.45\textwidth}}\quad
	\subfigure[Triangles adjacent to the edge $e$]{\epsfig{figure=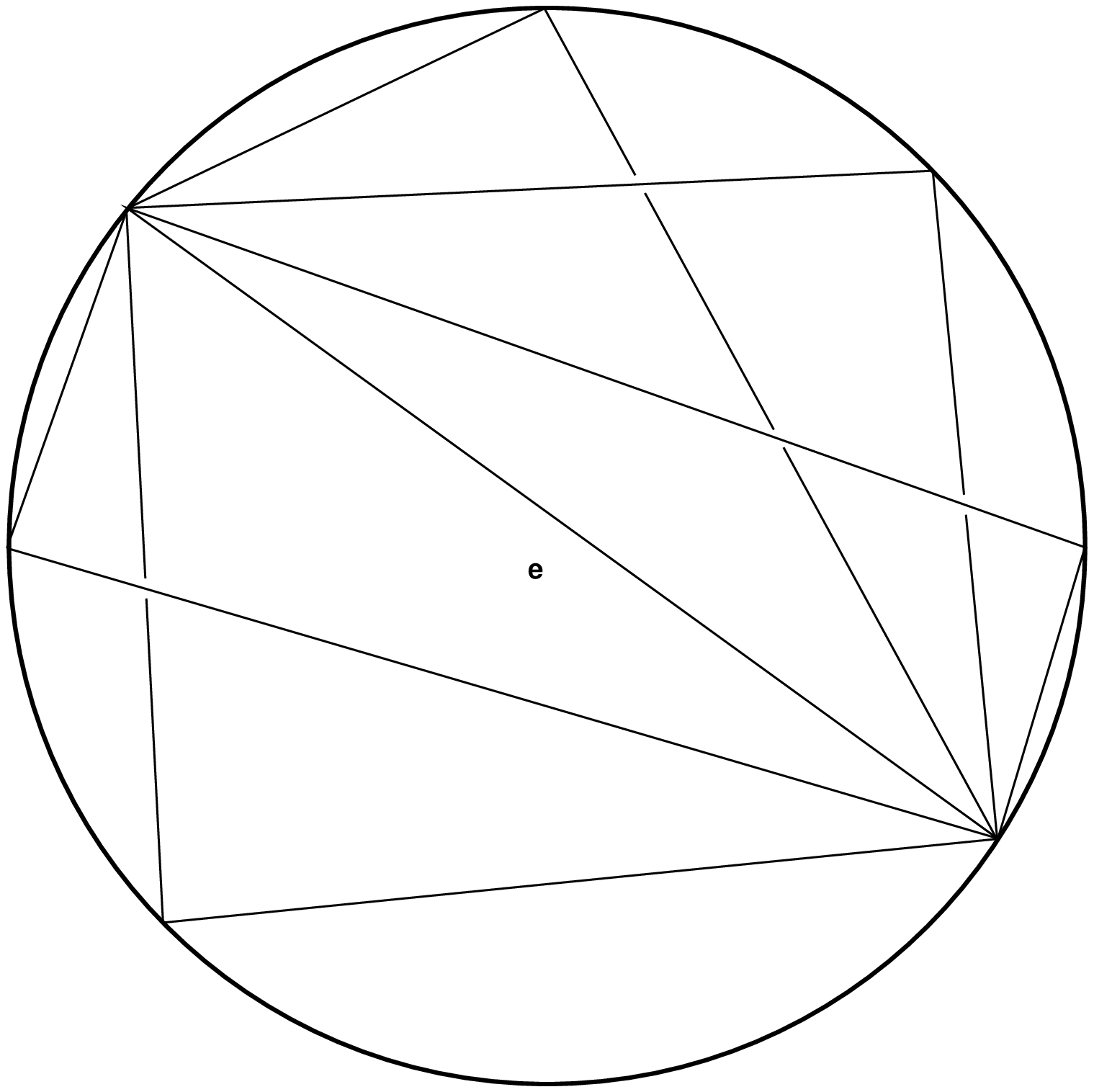,angle=0,width=.45\textwidth}}
	\caption{\label{leftveering}}
	\end{center}
\end{figure} 
{\bf Remark:} This criterion is determined purely in terms of the link of an edge in the taut ideal 
triangulation, so it makes sense even if the taut triangulation is not layered. 

\begin{prop} \label{veering}
A taut ideal triangulation of a fibered manifold coming from Whitehead moves is associated
to a periodic splitting sequence if and only if it is veering.
\end{prop}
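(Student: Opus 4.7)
My plan is to prove both directions by studying the link of an edge $E$ in the layered triangulation and relating it to the local combinatorics of the splits.

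First, I would identify the edge $E$ as being dual to a branch $b$ appearing in the train tracks $\tau_i, \ldots, \tau_j$: $b$ is born at time $i$ as the new branch $e'$ of the maximal split $\tau_{i-1} \split \tau_i$ (in the notation of Figure \ref{measuresplit}), and dies when $b$ itself is the maximal branch split at $\tau_j \split \tau_{j+1}$. By Lemma \ref{edgesplit}, both events occur in finite time since $\L^s$ is minimal. I would then describe the link of $E$ in the 3-manifold triangulation as a cycle of $2$-faces: two up-faces of the creating tetrahedron $T^{\mathrm{birth}}$ span one end, two down-faces of the destroying tetrahedron $T^{\mathrm{death}}$ span the other end, and intermediate $2$-faces come from tetrahedra whose maximal split is at a large branch adjacent to $b$, each such tetrahedron contributing an up-face/down-face pair on a single side of $E$.

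For the forward direction, I would exploit the intrinsic \emph{chirality} of each split, given by the weight comparison $\mu(a) \lessgtr \mu(c)$ in Figure \ref{measuresplit}, which tells us on which side the new small branch $e'$ sits. The key claim is: under a maximal splitting sequence, every split at a branch adjacent to $b$ during $b$'s lifetime has a chirality coherent with $b$'s own birth. This should follow from the fact that $\mu_k(b)$ stays constant at $\mu_i(b) = c-a$ while the maximum weight $M_k$ strictly decreases, so the relevant weight comparisons at the adjacent splits all resolve in one direction. Translating back to the dual triangulation, coherent chirality rotates the triangle adjacent to $E$ in a fixed angular direction on each side; the fact that both sides of $E$ veer the same way should follow from the compatibility of the chirality at $b$'s two switches, inherited from the single split that created $b$.

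For the backward direction, I would transport the transverse measure of $\L^s$ to each $\tau_k$ dual to $T_k$ via Theorem \ref{split1}, so that each Whitehead move realizes a split of $(\tau_k, \mu_k)$. The veering direction of the newly created edge of each move then fixes the chirality of the split. If the split were at a non-maximal branch, Lemma \ref{edgesplit} would guarantee the truly maximal branch is split later in the sequence, and its contribution to some edge link would introduce a triangle rotating opposite to the rest, contradicting the veering hypothesis. Hence each split must be maximal, and the triangulation arises from the maximal splitting sequence.

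The hard part will be the detailed combinatorial translation between the chirality of each split and the rotational direction of the adjacent triangles in $E$'s link. The symmetry forcing \emph{both} sides of $E$ to veer in the same direction is the subtlest point; it requires carefully tracking how the chirality at $b$'s two switches remains coherent throughout $b$'s lifetime under the maximality of the splits, and ruling out the excluded configurations of Remark \ref{excluded} as potential obstructions.
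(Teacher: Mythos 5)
Your overall framework (each edge $E$ dual to a branch $b$ that is born at one split and dies at a later one, with the intermediate faces of the link coming from moves at branches adjacent to $b$) matches the paper's setup, but the two key mechanisms you propose do not work. In the forward direction, your ``key claim'' — that coherent chirality of all splits adjacent to $b$ follows because $\mu_k(b)$ is constant while the maximal weight decreases — is a non sequitur: the chirality of a split at a large branch $f$ adjacent to $b$ is decided by comparing the weights of the two branches at an end of $f$, and the branch being compared against $b$ changes from split to split, so constancy of $\mu(b)$ decides nothing. The paper's argument is combinatorial and measure-free: working with the reversed (folding) sequence, $E$ is born dual to a large branch; for $E$ to be removed its dual branch must become small at \emph{both} switches, and if the first moves on the two sides of $E$ veer oppositely the branch becomes a one-sided small branch (Figure~\ref{leftright}), which is stable under all further folds and hence forbidden by Remark~\ref{excluded} and Lemma~\ref{edgesplit}; coherence of all subsequent moves on a fixed side then follows from the induction of Figure~\ref{leftleft}, tracking which edges adjacent to $E$ are dual to large branches. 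You mention the excluded configurations only as ``potential obstructions'' at the end, but they are the engine of the proof, not an afterthought.

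The backward direction has larger gaps. First, you presuppose measured train tracks $(\tau_k,\mu_k)$ dual to the $T_k$ for which every Whitehead move is a split; but an ideal triangulation is dual only to an unsmoothed trivalent spine, and constructing a globally consistent switch structure so that every layer is a fold is precisely what the veering hypothesis must be used for — this is the heart of the paper's converse (the switches on the front faces of each taut tetrahedron are forced by declaring the front edge dual to a large branch, and veering guarantees consistency across the back faces, Figure~\ref{consistent}). Second, you assume $\L^s$ is carried by these tracks, whereas the paper must \emph{produce} the invariant measure, via a Perron--Frobenius argument on the composed weight-space maps of the folding cycle. Third, your final step is false as stated: a splitting sequence for the same lamination performed in a non-maximal order differs from the maximal one only by commuting splits at disjoint branches, which attach tetrahedra along disjoint pairs of faces and hence produce the \emph{same} (still veering) triangulation; so veering cannot force each individual split to be maximal. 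The correct conclusion — that the resulting triangulation coincides with the one from the maximal splitting sequence — is obtained in the paper by invoking Hamenstadt's result that any two splitting sequences are related by commutations (argued as in Corollary~\ref{spliteq}), not by showing the given order is maximal.
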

\begin{proof}
The veering condition follows directly from the combinatorics of a splitting sequence, together
with the condition on excluded edges in Remark \ref{excluded}. In order to see these conditions,
it turns out it is necessary to work with folding sequences instead of splitting sequences.
The first time an edge $e$ appears in the sequence of Whitehead moves associated to a
folding sequence, it will be dual to a large branch. 
In order for another Whitehead
move to be applied to $e$, the dual branch must become small, and therefore foldings
must occur involving both switches of the branch, corresponding to Whitehead moves on both
sides of the edge $e$ (viewed in the universal cover $\tilde{\S}^\circ$). If one of these moves is left-veering and the other right-veering, then 
one sees that the branch dual to $e$ becomes a one-sided small branch (Figure \ref{leftright}), which is excluded
from a folding sequence (Figure \ref{excluded2}). Thus,  the initial Whitehead moves adjacent
to $e$ must be veering in the same direction. 
\begin{figure}[htb] 
	\begin{center}
	\psfrag{e}{$e$}
	\epsfig{file=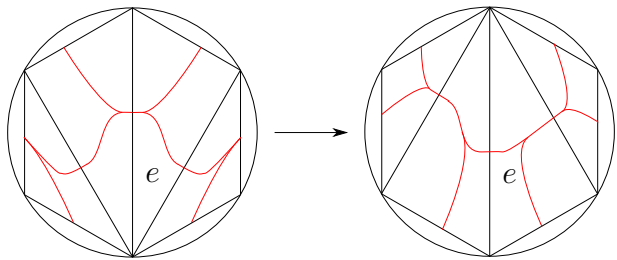, width=.5\textwidth}
	\caption{\label{leftright} Left/right Whitehead moves give an excluded configuration}
	\end{center}
\end{figure} 

The subsequent Whitehead moves on one side of $e$ must veer in the same direction as the
initial move. Initially, when the branch dual to $e$ is large, there are two edges $f, g$ in a
triangle containing $e$. When we perform a Whitehead move, say along the edge $g$, then 
the edge $g'$ replacing $g$ is dual to a large branch. Thus, any subsequent move adjacent to $e$
must be a Whitehead move dual to a fold along an edge $h$ which is not $g'$, and the edge
$h'$ replacing $h$ must be dual to a large branch after such a move (see Figure \ref{leftleft}). 
By induction, we see that the sequence of moves will always veer in the same
direction. 
\begin{figure}[htb] 
	\begin{center}
	\psfrag{e}{$e$}
	\psfrag{f}{$f$}
	\psfrag{g}{$g$}
	\psfrag{h}{$h$}
	\psfrag{i}{$g'$}
	\psfrag{j}{$h'$}
	\epsfig{file=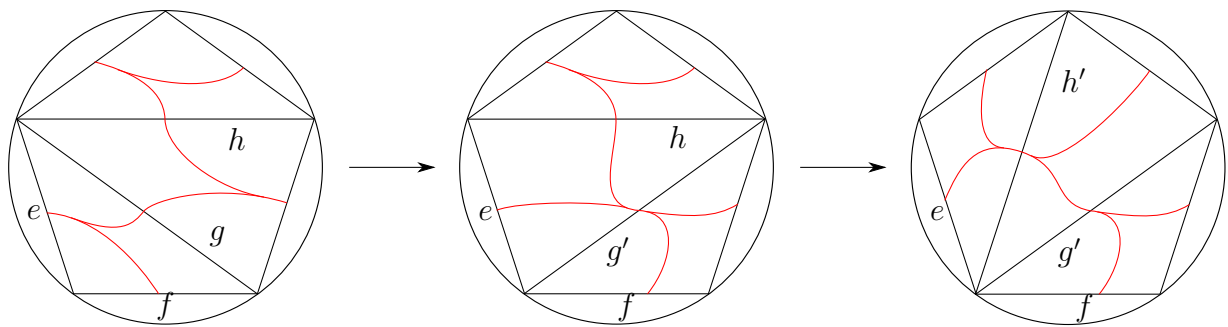, width=\textwidth}
	\caption{\label{leftleft} Left Whitehead moves must be followed by left moves}
	\end{center}
\end{figure}

Conversely, suppose we have a layered taut ideal triangulation such that the link of every edge is veering. 
We want to show how to associate to each triangle of the triangulation a train track switch. These switches
need to have the property that the induced branched surface on the boundary of a taut tetrahedron  $T$ corresponds
to a fold, with the front edge $e'$ of $T$ corresponding to the large branch, and the back edge $e$ of $T$ corresponding to the
small branch. We note that the train track on the front two faces of $T$ is uniquely determined by the fact that
the front edge $e'$ is dual to a large branch. So we choose the switch in each triangle  by the choice consistent
with the taut tetrahedron in back of the triangle. Now we check that the veering condition implies that this
choice is globally consistent. 

For each taut tetrahedron $T$, the switches in the front two faces are determined by our convention so that $e'$ is a
large branch, so we need
only check what happens on the back two faces of $T$. Take the two taut tetrahedra adjacent to these back faces. 
The veering property implies that the back edge $e$ of $T$ is a small branch of the induced train track (see Figure \ref{consistent}). Thus,
we see that for any closed loop of Whitehead moves corresponding to the layered taut veering triangulation, we
get a closed folding sequence. 
\begin{figure}[htb] 
	\begin{center}
	\psfrag{e}{$e$}
	\psfrag{f}{$e'$}
	\psfrag{T}{$T$}
		\epsfig{file=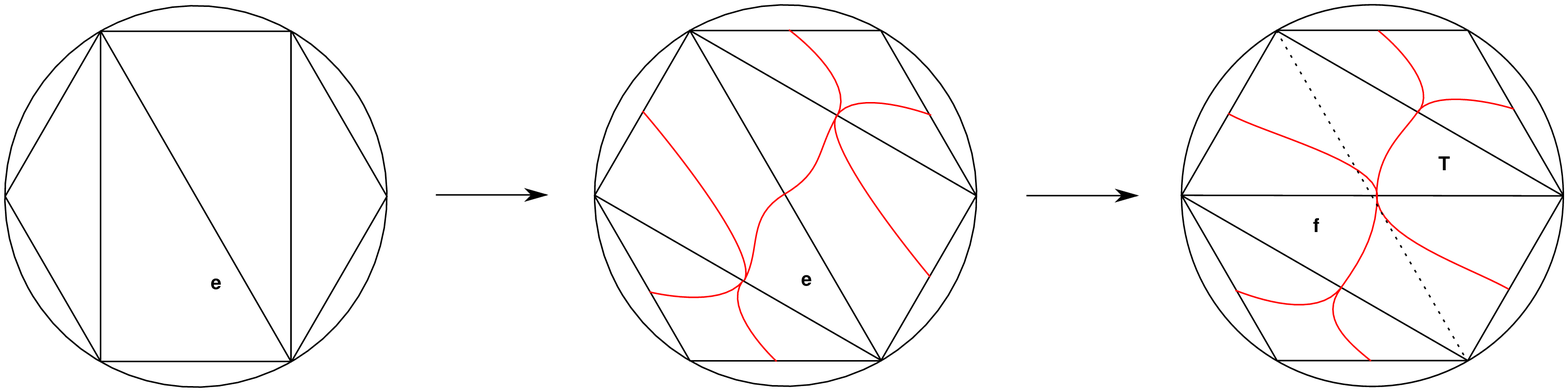, width=\textwidth}
	\caption{\label{consistent} Veering determines the train track on the boundary of $T$}
	\end{center}
\end{figure}
To finish, we need to see that the folding sequence gives rise to a maximal splitting sequence. The fact that
the folding sequence has an invariant class of projective measures follows from a Perron-Frobenius argument since the induced map of weight spaces is a Perron-Frobenius matrix, as in \cite{Penner91}. 
Thus, the invariant measure gives rise to a lamination invariant under the monodromy $\varphi$. Reversing the folding
sequence gives a splitting sequence invariant under $\varphi$. Now we apply a result of Hamenstadt which implies
that any two splitting sequences are related by commutations \cite[Section 5]{Hamenstadt09}. The proof of Hamenstadt's result may
be made similarly to the proof of Corollary \ref{spliteq}.  This implies that the layered veering triangulation is the same
as the one obtained by a maximal splitting sequence. 
\end{proof}

\begin{cor}
A periodic splitting sequence of a pseudo-Anosov mapping torus coming from $\L^s$ gives rise to a sequence
of Whitehead moves, which when reversed corresponds to a periodic splitting sequence associated
to $\L^u$. 
\end{cor}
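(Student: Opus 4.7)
The plan is to exploit an involution on the class of layered taut veering triangulations that swaps the roles of $\L^s$ and $\L^u$. Starting from the given periodic maximal splitting sequence suited to $\L^s$, the Main Construction produces a layered taut veering triangulation $\MC$ of $\T(\varphi^\circ)$ whose underlying sequence of Whitehead moves is $T_0\to T_1\to\cdots\to T_m=\varphi^\circ(T_0)$. I will show that reversing this sequence is realized by another periodic splitting sequence, which must be suited to $\L^u$.

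First I would observe that reversing the coorientation on every $2$-face of $\MC$ yields a new taut ideal triangulation $\MC^{-}$ of the same $3$-manifold. Taut tetrahedra are preserved (the two cooriented-inward faces and the two cooriented-outward faces simply swap roles, and the angle $0$/$\pi$ assignment is unchanged), and the edge angle sums remain $2\pi$. Combinatorially $\MC^{-}$ has the same layered structure as $\MC$ but traversed in the opposite direction along the circle factor, so its induced sequence of Whitehead moves on fibers is exactly the reverse $T_m\to T_{m-1}\to\cdots\to T_0$. Furthermore, reversing the coorientation reverses the cyclic ordering of triangles encountered around each edge, so every left-veering edge of $\MC$ becomes right-veering in $\MC^{-}$ and vice versa; in particular $\MC^{-}$ is still a veering triangulation.

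Next I would invoke Proposition \ref{veering} applied to $\MC^{-}$. This produces a periodic maximal splitting sequence of measured train tracks, dual to the fiber triangulations of $\MC^{-}$, whose induced Whitehead-move sequence is precisely the reversed sequence $T_m\to\cdots\to T_0$. The invariant measured lamination $\L$ of this splitting sequence, obtained from the Perron--Frobenius eigenvector of the weight-space map, is invariant under the monodromy of $\T(\varphi^\circ)$ viewed with the opposite transverse orientation, namely $(\varphi^\circ)^{-1}$. Since the Perron--Frobenius weights correspond to the projectively attracting fixed ray of that monodromy on the weight space, $\L$ must be the stable lamination of $(\varphi^\circ)^{-1}$, which is exactly $\L^u$ by the defining property of stable and unstable laminations from Section 2.

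The main delicate point is the middle step: verifying carefully that reversing the layered structure of $\MC$ is indeed effected by flipping every coorientation, and that the veering condition is preserved (with left and right interchanged) under this flip. Once this is granted, the conclusion is immediate from Proposition \ref{veering} together with the characterization of $\L^u(\varphi)$ as $\L^s(\varphi^{-1})$.
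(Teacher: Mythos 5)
Your proposal is correct and follows essentially the same route as the paper: the paper's (much terser) proof likewise reverses the coorientation of all triangles, observes that the result is still a veering layered triangulation, and invokes Proposition \ref{veering} to conclude the reversed Whitehead sequence comes from a periodic splitting sequence. Your additional verification that left- and right-veering edges swap, and your identification of the resulting invariant lamination as $\L^u=\L^s(\varphi^{-1})$ via the Perron--Frobenius eigenvector, are correct elaborations of steps the paper leaves implicit.
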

\begin{proof}
The induced triangulation of the punctured mapping torus is veering. 
Changing the orientation of all the triangles also gives a veering triangulation. 
So the reversed sequence of Whitehead moves must also be associated to 
a periodic splitting sequence, by Proposition \ref{veering}. 
\end{proof}
This shows that the layered taut ideal triangulation associated to a pseudo-Anosov map is 
intrinsic, in that it does not depend on $\L^s$ or $\L^u$. 

\section{Example} \label{example}
For mapping classes of the punctured torus or the 4-punctured sphere, the main construction will produce
the canonical layered ideal triangulations considered in \cite{Jorgensen03, FH82}. 
For concreteness, we present the results of the main construction for the case of a 4-strand pseudo-Anosov
braid of minimal dilatation. This braid was proven to be minimal dilatation among 4-strand pseudo-Anosov
braids by Ko, Los, and Song \cite{KLS}. The associated pseudo-Anosov map $\varphi:\S_{0,5}\to\S_{0,5}$ 
has dilatation $\lambda=\lambda(\varphi)= 2.29663\ldots$, where $\lambda$ is the maximal root of the
polynomial $x^4-2x^3-2x+1$. The invariant laminations have 5 complementary regions which are punctured
monogons, and one complementary region which is a triangle. So $\S^\circ_\varphi = \S_{0,6}$, and the
mapping torus $\T(\varphi^\circ)$ is shown in Figure \ref{braid}. 
\begin{figure}[htb] 
	\begin{center}
		\epsfig{file=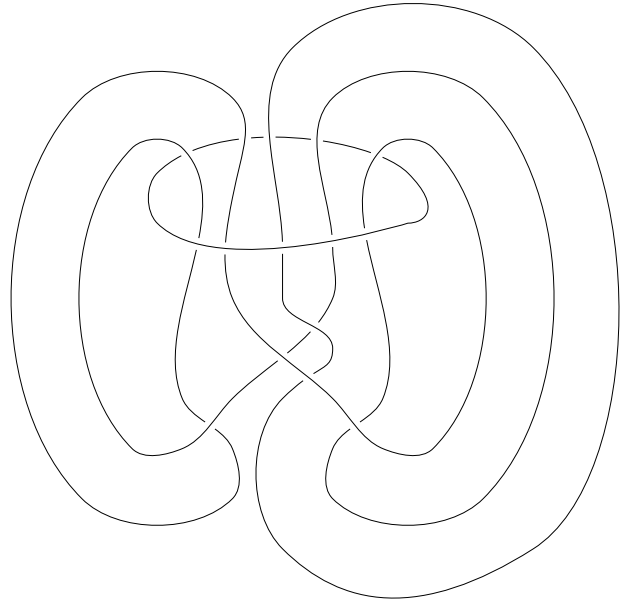, width=.5\textwidth}
	\caption{\label{braid} Braid representing $\T(\varphi^\circ)$ (the middle strand is the singular fiber)}
	\end{center}
\end{figure}
The manifold is a 3-component link complement, which has hyperbolic volume $5.33...$ (conjecturally this is the minimal
volume 3-cusped hyperbolic 3-manifold). 
Given the lamination data computed by Ko-Los-Song, we found a periodic maximal splitting sequence by hand, shown in Figure \ref{maximal splitting}.
\begin{figure}[htb] 
	\begin{center}
		\epsfig{file=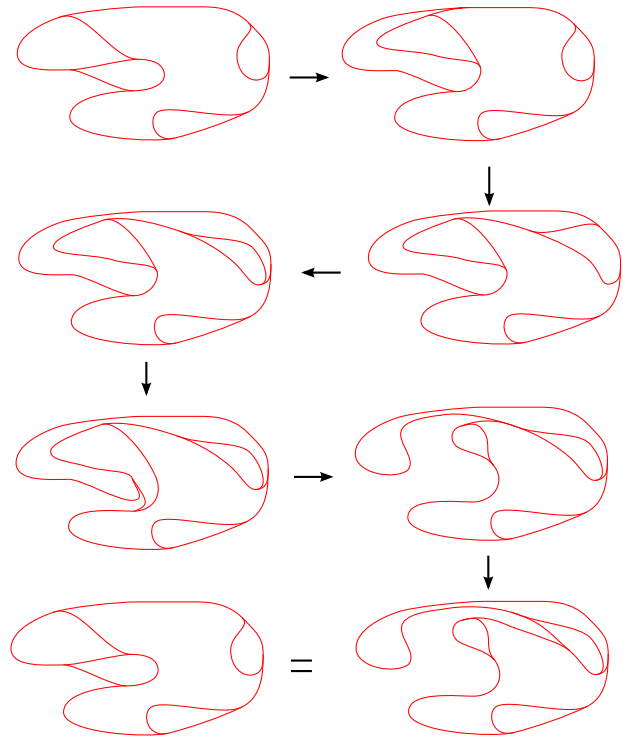, width=.5\textwidth}
	\caption{\label{maximal splitting} A maximal splitting sequence for $\varphi$}
	\end{center}
\end{figure}
Since there are six splits, the manifold has a 6 tetrahedron layered veering triangulation. 
If we represent the tetrahedra by ideal hyperbolic simplices, then they form an ideal hyperbolic tetrahedron triangulation
of the manifold, since if one of the tetrahedra had reversed orientation, the volume would be $<5.3$.

\section{Dilatation bounds} \label{dilatation}
This section gives an alternative approach to the paper \cite{FLM}. 
Given a pseudo-Anosov homeomorphism $\varphi$ of a surface $\S_{g,n}$, $\chi(\S_{g,n})<0$, let $\lambda(\varphi)$ denote its dilatation.   For
any $P > 1$, we define
\[ \Psi_P = \left\{ \varphi:\S_{g,n} \to \S_{g,n} \, \big{|} \, \varphi \mbox{ pseudo-Anosov, and } \lambda(\varphi) \leq P^{\frac{1}{2g-2+2/3n}} \right\}. \]
It follows from work of Penner \cite{Penner91} that for $P$ sufficiently large, and for each closed surface $\S_g$ of genus $g \geq 2$, there exists $\varphi_g:\S_g \to \S_g$ so that
\[ \{ \varphi_g : \S_g \to \S_g \}_{g=2}^\infty \subset \Psi_P. \]

Given a pseudo-Anosov homeomorphism $\varphi:\S \to \S$, let $\S^\circ_\varphi = \S^\circ \subset \S$ be the surface obtained by
removing the singularities of the stable and unstable foliations for $\varphi$ and let $\varphi|_{\S^\circ}:\S^\circ \to \S^\circ$ denote the
restriction. The set of pseudo-Anosov homeomorphisms
\[ \Psi_P^\circ = \left\{ \varphi|_{\S^\circ}:\S^\circ \to \S^\circ \, |\, (\varphi:\S \to \S) \in \Psi_P \right\} \]
is therefore also infinite.

\begin{lemma}
A maximal train track on $\S_{g,n}$ has at most $18g-18+6n$ branches. 
\end{lemma}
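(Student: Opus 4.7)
The plan is a direct Euler-characteristic count. Recall (see \cite{PennerHarer92}) that a \emph{maximal} train track $\tau \subset \S_{g,n}$ is one whose complementary regions are all either trigons or once-punctured monogons; any strictly smaller region would violate the negative doubled-Euler-characteristic condition on complementary regions imposed in Section 2. Let $V$ denote the number of switches, $E$ the number of branches, $T$ the number of trigon regions, and $M$ the number of once-punctured monogon regions of $\tau$.

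The three combinatorial ingredients I would record are: (a) each switch is trivalent, so $3V = 2E$; (b) each switch contributes exactly one cusp, namely the cusp between its two small half-branches on the side of the tangent space opposite the large half-branch, and each such cusp lies on the boundary of a unique complementary region, so that $V = 3T + M$ (a trigon carries three cusps and a once-punctured monogon carries one); and (c) trigons are unpunctured while each once-punctured monogon contains exactly one puncture of $\S_{g,n}$, so $M = n$.

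Next I would compute the Euler characteristic. Filling in the $n$ punctures produces the closed surface $\S_g$, in which every complementary region of $\tau$ is a genuine $2$-disk (the filled-in interior puncture inside a monogon region poses no obstruction to its being a $2$-cell). Thus $\tau$, together with the $T + M$ region faces, is a CW decomposition of $\S_g$, giving
\[
V - E + T + M = \chi(\S_g) = 2 - 2g.
\]
Substituting $E = 3V/2$, $M = n$, and $T = (V-n)/3$ into this identity and solving yields $V = 12g - 12 + 4n$, and hence $E = 18g - 18 + 6n$.

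The only step requiring real care is the cusp-to-switch bijection in (b); it rests on the precise local model of a switch (two small half-branches on one side of the tangent space, one large half-branch on the other), which produces exactly one cusp per switch. The weaker assertion \emph{at most} in the lemma statement, as opposed to equality, is then accounted for by the observation that any train track on $\S_{g,n}$ can be extended to a maximal one by subdividing its oversized complementary regions, an operation that only increases the branch count.
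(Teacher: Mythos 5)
Your proof is correct and follows essentially the same route as the paper: trivalence giving $3V=2E$, the switch--cusp bijection giving $V=3T+n$, and an Euler characteristic count (the paper computes $\chi(\S_{g,n})=V-E+T$ directly on the punctured surface, which is the same identity you obtain after filling in the punctures). The arithmetic and the conclusion $E=18g-18+6n$ match the paper exactly.
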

\begin{proof}
Assume every complementary region of $\tau$ is a punctured monogon or a trigon. Then
the number of monogons is at most $n$, and let $t$ denote the number of trigons. 
Let $e$ denote
the number of branches, and let $v$ denote the number of switches of the train track. 
Since each switch is in one to one correspondence with a cusp of a complementary region,
we get $v=n+3t$. Moreover, $3v=2e$ counts the number of pairs of incidences between branches and switches. 
The euler characteristic is given by $\chi(S) = 2-2g-n = v-e+ t$.
Solving for $e$, using the equations $t=(v-n)/3$ and $v=\frac23 e$ to eliminate $v, t$, we get $e= 18g-18 + 6n$.  
\end{proof}

The following theorem gives a refinement of \cite[Theorem 1.1]{FLM}. 
\begin{theorem} \label{tetrahedra bound}
Let $M \in \mathcal{T}(\Psi_P)$ be a mapping torus of a punctured
pseudo-Anosov class $M=\mathcal{T}(\varphi^\circ)$, $\varphi \in \Psi_P$. 
Then $M$ has a taut ideal triangulation with at most  $\frac12(P^9-1)$ tetrahedra. 
\end{theorem}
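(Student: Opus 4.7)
The plan is as follows. By the main construction of Section \ref{triangulations}, applied to the periodic maximal splitting sequence from Theorem \ref{periodicsplitting}, $\T(\varphi^\circ)$ admits a taut ideal triangulation whose tetrahedra are in bijection with the elementary splits performed in one period of the sequence. So it suffices to bound the number $m$ of such splits by $\tfrac12(P^9-1)$.

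I would track the total branch weight $\|\mu_i\|:=\sum_{e\subset\tau_i}\mu_i(e)$ along the sequence $(\tau_0,\mu_0)\split\cdots\split(\tau_m,\mu_m)$. Reading off Figure \ref{measuresplit}: if $(\tau,\mu)\split_e(\tau',\mu')$ with $\mu(e)=a+b=c+d$ and (WLOG) $a\le c$, then the branches other than $e$ keep their weights while $e$ is replaced by $e'$ of weight $c-a$, giving $\|\mu'\|=\|\mu\|-(a+d)$. Hence each split drops the total weight by at least $2\mu_{\min}^{(i)}$, where $\mu_{\min}^{(i)}$ is the smallest branch weight at step $i$. A short inspection shows $\mu_{\min}$ is non-increasing along the sequence, and Theorem \ref{periodicsplitting} gives $\|\mu_m\|=\lambda^{-1}\|\mu_0\|$ and $\mu_{\min}^{(m)}=\lambda^{-1}\mu_{\min}^{(0)}$, so $\mu_{\min}^{(i)}\ge\lambda^{-1}\mu_{\min}^{(0)}$ throughout. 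Summing decreases over all $m$ splits yields $2m\lambda^{-1}\mu_{\min}^{(0)}\le\|\mu_0\|(1-\lambda^{-1})$, i.e.
\[
m\le \tfrac12(\lambda-1)\,\frac{\|\mu_0\|}{\mu_{\min}^{(0)}}.
\]

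To bound $R:=\|\mu_0\|/\mu_{\min}^{(0)}$, I reverse the splitting sequence into a folding sequence; composition over one period is a nonnegative integer $N\times N$ matrix $A$ on the weight space with spectral radius $\lambda$ and Perron eigenvector $\mu_0$. Here $N$ is the number of branches of $\tau_0$; an Euler-Poincar\'e count of the complementary regions of a pseudo-Anosov lamination on $\S_{g,n}$ (using that interior pA singularities are $p_i$-prong with $p_i\ge 3$, so $s\le 4g-4+n$) gives $N=6g-6+3(n+s)\le 18g-18+6n=9(2g-2+\tfrac{2n}{3})$. A Perron-Frobenius estimate, obtained by a path argument in the strongly connected directed graph underlying $A$, yields $R\le(\lambda^N-1)/(\lambda-1)$, and combining with the inequality above gives $m\le\tfrac12(\lambda^N-1)$. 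Finally, $\varphi\in\Psi_P$ means $\lambda^{2g-2+2n/3}\le P$, hence $\lambda^N\le P^9$ and $m\le \tfrac12(P^9-1)$.

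The main obstacle is the Perron-Frobenius estimate $R\le(\lambda^N-1)/(\lambda-1)$. The easy bound $R\le N\lambda^{N-1}$ follows from a single-path argument using the diameter bound for strongly connected digraphs on $N$ vertices, but this would cost an extra factor of $N$. Obtaining the sharper geometric-series bound -- which matches precisely the $(\lambda-1)$ factor from the per-split decrease to produce the clean inequality $m\le\tfrac12(\lambda^N-1)$ -- requires a finer combinatorial argument, for instance by summing contributions from a spanning family of distinct-length paths from the minimum-weight vertex to each other vertex of the graph of $A$, exploiting integrality of $A$.
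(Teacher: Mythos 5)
Your argument reaches the same bound as the paper but by a genuinely different accounting. The paper works entirely on the folding side: each fold is a unipotent matrix on the weight space contributing $2$ to the off-diagonal entry sum, so the composite over one period is a Perron--Frobenius matrix with entry sum at least $2m+e$, and a cited lemma of Ham--Song bounding the entry sum of such a matrix by $\lambda^e+e-1$ gives $2m+1\le\lambda^e\le P^9$ directly. You instead work on the splitting side, observing that each elementary split decreases the total transverse measure $\|\mu\|$ by $a+d\ge 2\mu_{\min}$, that $\mu_{\min}$ is non-increasing along the sequence, and that periodicity rescales everything by $\lambda^{-1}$; this converts the count of splits into the ratio $R=\|\mu_0\|/\mu_{\min}^{(0)}$, which you then must control by a Perron--Frobenius eigenvector estimate. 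Your route has the appeal of a direct geometric interpretation (total measure as a Lyapunov-type quantity), at the cost of needing the eigenvector ratio bound rather than the off-the-shelf entry-sum lemma; both yield $m\le\frac12(\lambda^N-1)$ with the same branch count $N\le 18g-18+6n$.

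The one substantive gap is the estimate $R\le(\lambda^N-1)/(\lambda-1)$, which you flag yourself. It is in fact true and your sketched path argument closes it, with one refinement: you do not need paths of literally distinct lengths. Let $A$ be the (primitive, hence strongly connected) nonnegative integer period matrix with $Av=\lambda v$, let $j$ realize $\min_i v_i$, and let $d_i$ be the digraph distance from $j$ to $i$. Then $\lambda^{d_i}v_j=(A^{d_i}v)_j\ge (A^{d_i})_{ji}v_i\ge v_i$, so $\sum_i v_i\le \min(v)\sum_i\lambda^{d_i}$. Since every value $0,1,\dots,\max_i d_i$ is realized as some $d_i$, the $k$-th smallest distance is at most $k-1$, whence $\sum_i\lambda^{d_i}\le\sum_{k=0}^{N-1}\lambda^k=(\lambda^N-1)/(\lambda-1)$ because $\lambda>1$. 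This majorization step is exactly what upgrades your ``easy'' bound $N\lambda^{N-1}$ to the geometric series. You should also note explicitly that strong connectivity (needed for $d_i\le N-1$) comes from primitivity of the period matrix, which the paper asserts via Penner; and that the identification $W(\varphi(\tau))\cong W(\tau)$ by $\varphi$ permutes branches, so the ratio $R$ for $\mu_0$ agrees with that of the Perron eigenvector of $A$. With these points supplied, your proof is complete.
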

\begin{proof}
Let $e$ be the number of branches of a train track $\tau$ fully carrying a stable lamination for $\varphi$. By the previous lemma, $e\leq 18g-18+6n$. 
Then $\S^\circ$ has an ideal triangulation with $e$ edges. Let $\tau=\sigma_0 \to \sigma_1 \to \cdots \to \sigma_m = \varphi(\tau)$
be an invariant sequence of train track foldings coming from Theorem \ref{periodicsplitting}. The weight space $W(\sigma_i)$ has positive coordinates in a space of
dimension $e$. Folding changes the weight on the folded edge
by the sum of the weights of the two edges folded onto it (see Figure \ref{fold}). Since there is a 1-1 correspondence between the edges
of $\sigma_i$ and $\sigma_{i+1}$, we can think of the map $W(\sigma_i)\to W(\sigma_{i+1})$ as a unipotent
matrix, with the sum of entries $e+2$. Mutiplying such matrices at least adds the off-diagonal entries. 
So the induced map $W(\tau)\to W(\varphi(\tau))$ is a matrix 
with the sum of the entries at least $2m+e$. Then from \cite[Lemma 3.1]{HamSong07}, we get $2m+1 \leq \lambda(\varphi)^e $. 
By the hypothesis that $\varphi\in \Psi_P$, we have $\lambda(\varphi) \leq P^\frac{9}{e}$, so we conclude that $2m+1\leq P^9$. Thus, $\mathcal{T}(\varphi^\circ)$
has a taut ideal layered triangulation with at most $\frac12(P^9-1)$ tetrahedra. 
\end{proof}

In particular, the collection of mapping tori $\{ \mathcal{T}(\varphi^\circ) | \varphi \in \Psi_P\}$ is finite. 
Let's apply this result to the case of the closed surface of genus $g$. Let $\delta_g$ be the minimal dilatation of a pseudo-Anosov
map of a closed surface $\S_g$ of genus $g$. By a result of Hironaka and Kin, $\delta_g^{g-1}\leq 2+\sqrt{3}$.
Thus, if $\varphi$ is a pA map of $\S_g$ with dilatation $\delta_g$, then $\lambda(\varphi)=\delta_g \leq (2+\sqrt{3})^\frac{1}{g-1}= (2+\sqrt{3})^\frac{2}{-\chi(S)}$.
So we may take $P=(2+\sqrt{3})^2$ in the above theorem, and 
we obtain the number of tetrahedra of a taut ideal triangulation  is bounded by $\frac12( (2+\sqrt{3})^{18}  -1) \leq 10^{10}$. 

\section{Conclusion}
There are several natural questions arising from this paper. 

First, in the previous section we gave an upper bound on the number of tetrahedra needed for a taut ideal layered triangulation
of the punctured mapping torus of a minimal dilatation pA map of a closed surface $\S_g$. One ought to be able to improve this estimate from an improvement of the upper bound on $\delta_g$ in the papers \cite{Hironaka09, AaberDunfield10}.
Moreover, one ought to be able to improve the lower bound given in  Theorem \ref{tetrahedra bound} by getting a better estimate
of the Perron-Frobenius eigenvalues of the transformations of weight spaces. It is an interesting question to compute the numbers
$\delta_g$ or more generally $\delta_{g,n}$. The splitting sequences in this paper give a possible approach to this for a given $g,n$.
One may form finitely many automata of folding sequences for a given surface $\S_{g,n}$, one for each possible collection of 
indices of the singular points. Then one can compute the minimal dilatations for closed paths in these folding automata.  
Unfortunately, there appears to be many more train tracks which appear 
in these automata than in those considered by Ko-Los-Song  \cite{KLS, HamSong07}.

Secondly, the main construction of a taut ideal layered triangulation associated to a pseudo-Anosov mapping class gives a new
classification of conjugacy classes of pA maps. To a pA mapping class, one may associate the taut layered triangulation
of the mapping torus, together with an encoding of the homology class of the fiber. This homology class may be represented 
canonically as a harmonic simplicial 2-cycle for the taut triangulation. This amounts to assigining weights to the faces of the triangulation
such that the boundary is zero, and the signed sum of the weights on the faces of a taut tetrahedron is zero (corresponding to the dual 1-chain
being a 1-cycle). Then two pA mapping classes are conjugate if and only if there is a combinatorial equivalence between the two
triangulations which preserves the 2-cycles. This gives a new way of encoding pseudo-Anosov conjugacy classes, which seems to be somewhat
simpler than previous methods (see \cite{Hem, Mosher83, Mosher86, Mosher03}). There is also a conjugacy invariant implicit in the geometrization
theorem for mapping tori \cite{Thurstonfiber}.  In particular, our approach simplifies Hamenstadt's  conjugacy 
invariant coming from her study of cube complexes of train track splitting sequences \cite{Hamenstadt09}, but unpublished. It
would be interesting to implement this classification algorithmically, and analyze the computational complexity. 

Thirdly, it would be interesting to understand the implications of a veering taut ideal triangulation which is not layered.
There are two branched surfaces naturally associated to such a triangulation as constructed in the proof of Proposition \ref{veering}, which should carry essential laminations. 
It would be interesting to find an example of a veering triangulation which
does not come from a fibration (that is, is not layered). 

Finally, it is an interesting question whether the veering triangulations studied in Section \ref{triangulations} can be realized in the
hyperbolic metric as ideal hyperbolic triangulations with all positively oriented tetrahedra. This is true in the punctured torus case \cite{Jorgensen03, Lackenby03, Gueritaud06},
and in the example investigated in Section \ref{example}. It's possible that the approach of \cite{Gueritaud06} may be generalized to deal with this
question for veering triangulations. 

\bibliography{pAtriangulation3}

\end{document}